\newcommand{\nc}{\newcommand}
\nc{\nt}{\newtheorem}
\nc{\bs}{\bigskip}
\nc{\dmo}{\DeclareMathOperator}
\nc{\E}{\mathcal{E}}
\nc{\SB}{\mathcal{B}}
\nc{\M}{\mathcal{M}}
\nc{\SI}{\mathcal{SI}}
\nc{\D}{D}
\dmo{\Mod}{Mod}
\dmo{\PMod}{PMod}
\dmo{\SMod}{SMod}
\dmo{\SHomeo}{SHomeo}
\dmo{\Homeo}{Homeo}
\dmo{\Sp}{Sp}
\dmo{\SL}{SL}
\dmo{\spl}{\mathfrak{sp}}
\dmo{\SSp}{SSp}
\dmo{\even}{even}
\dmo{\Aut}{Aut}
\dmo{\Sym}{Sym}
\nc{\ia}{\hat\imath}
\nc{\al}{\alpha}
\nc{\be}{\beta}
\nc{\ga}{\gamma}
\nc{\de}{\delta}
\nc{\ep}{\epsilon}
\nc{\Z}{\mathbb{Z}}
\nc{\R}{\mathbb{R}}
\nc{\N}{\mathbb{N}}
\nc{\C}{\mathbb{C}}
\dmo{\B}{B}
\dmo{\Br}{K}
\dmo{\Brun}{Brun}
\dmo{\T}{\mathcal T}
\dmo{\K}{K}
\dmo{\Ker}{Ker}
\dmo{\PB}{PB}
\dmo{\GL}{GL}
\dmo{\Ann}{Ann}
\nc{\BI}{\mathcal{BI}}
\nc{\Push}{\Psi}
\nc{\p}[1]{\medskip\paragraph{{\bf #1}}}
\nc{\margin}[1]{\marginpar{\scriptsize #1}}
\nc{\bl}{ \begin{list}{$\cdot$}{
\setlength{\leftmargin}{.5in}
\setlength{\rightmargin}{.5in}
\setlength{\parsep}{0.5ex plus .2ex minus 0ex}
\setlength{\itemsep}{0.2ex plus 0.2ex minus 0ex}
}
}
\nc{\el}{\end{list}}
\title{The level four braid group$^\dagger$}
\begin{document}
	
\input{epsf.sty}

\author{Tara E. Brendle}

\author{Dan Margalit}

\address{Tara E. Brendle \\ School of Mathematics \& Statistics\\ 15 University Gardens \\ University of Glasgow \\ G12 8QW \\ tara.brendle@glasgow.ac.uk}

\address{Dan Margalit \\ School of Mathematics\\ Georgia Institute of Technology \\ 686 Cherry St. \\ Atlanta, GA 30332 \\  margalit@math.gatech.edu}

\thanks{The first author is supported in part by EPSRC grant EP/J019593/1. The second author supported by the National Science Foundation under Grant No. DMS - 1057874. \\ \hspace*{3ex} $^\dagger$This version is an update of the published version.}



\begin{abstract}
By evaluating the Burau representation at $t=-1$, we obtain a symplectic representation of the braid group.  We define the congruence subgroups of the braid group to be the preimages of the principal congruence subgroups of the symplectic group.  Our main result is that the level four congruence subgroup of the braid group is equal to the group generated by squares of Dehn twists and is also equal to the group generated by squares of pure braids.    We also compute the image of the point pushing subgroup under the symplectic representation.	
\end{abstract}

\maketitle

\vspace*{-2ex}

\section{Introduction}

The integral Burau representation of the braid group is the representation $\rho : \B_n \to \GL_n(\Z)$  obtained by evaluating the (unreduced) Burau representation $\B_n \to \GL_n(\Z[t,t^{-1}])$ at $t=-1$.  The level $m$ congruence subgroup $\B_n[m]$ of $\B_n$ is the kernel of the mod $m$ reduction
\[ \B_n \stackrel{\rho}{\to} \GL_n(\Z) \to \GL_n(\Z/m). \]
As we explain in Section~\ref{sec:burau}, $\rho$ can be regarded as a symplectic representation.   Hence $\rho$ plays the same role for the braid group as the classical symplectic representation does for mapping class groups of closed surfaces.
The representation $\rho$ has connections to many areas of mathematics, such as algebraic geometry, number theory, dynamics, and topology; see, e.g., the work
A'Campo \cite{acampo}, Arnol'd \cite{arnold}, Smythe \cite{smythe}, Band--Boyland \cite{bandboyland}, Cohen--Wu \cite{cohenwu}, Funar--Kohno \cite{FunarKohno}, Gambaudo--Ghys \cite{GambaudoGhys}, Hain \cite{hain}, Khovanov--Seidel \cite{KhovanovSeidel}, Magnus--Peluso \cite{MagnusPeluso},
McMullen \cite{mcmullen}, Morifuji \cite{morifuji}, Mumford  \cite{mumford}, Venkataramana \cite{Venk}, Wajnryb \cite{bw}, and Yu \cite{yu}.

The mapping class group $\Mod(S)$ of a surface $S$ with marked points is the group of homotopy classes of homeomorphisms of $S$ fixing the set of marked points and fixing $\partial S$ pointwise.  Let $\D_n$ denote a closed disk with $n$ marked points in the interior.  We have the following classical fact:
\[ \B_n \cong \Mod(\D_n). \]
As such, it is natural to ask for descriptions of the $\B_n[m]$ that are intrinsic to either braid groups or mapping class groups.  The first result in this direction is due to Arnol'd \cite{arnold} who proved that $\B_n[2]$ is equal to the pure braid group $\PB_n$.  Artin had previously proved that the latter is identified with the subgroup of $\Mod(\D_n)$ generated by Dehn twists.  Denote by $\T_n[m]$ the subgroup of $\Mod(\D_n)$ generated by the $m$th powers of all Dehn twists.  Identifying $\B_n$ with $\Mod(D_n)$, we can summarize the theorems of Arnol'd and Artin as:
\[ 
\B_n[2] = \PB_n = \T_n[1].
\]
Our main theorem gives an analogue for $\B_n[4]$.  Let $\PB_n^2$ be the subgroup of $\PB_n$ generated by the squares of all elements; note that for any group $G$, the group $G^2$ equals the kernel of $G \to H_1(G;\Z/2)$.

\begin{main}
\label{theorem:main}
For $n \geq 1$, we have $\B_{n}[4] = \PB_n^2 = \T_{n}[2]$.
\end{main}

The first equality of our Main Theorem, which is proven in Section~\ref{sec:level four mod 2}, is known in the case $n$ is odd; it appears in the unpublished paper of Yu \cite{yu}.   This equality has a natural interpretation in terms of moduli spaces; see Section~\ref{section:back}.

The second equality of our Main Theorem, proven in Section~\ref{sec:sql}, has a precursor in the case of the mapping class group of a closed, orientable surface of genus $g$: Humphries \cite{Humphries} proved that the level two mapping class group, that is, the kernel of the map $\Mod(S_g) \to \Sp_{2g}(\Z/2)$ given by the action of $\Mod(S_g)$ on $H_1(S_g;\Z/2)$, is equal to the subgroup of $\Mod(S_g)$ generated by all squares of Dehn twists about nonseparating curves.

In Section~\ref{sec:brun} we determine the image under $\rho$ of the subgroup of $\B_n$ consisting of those braids that become trivial when one strand is deleted; see Theorem~\ref{theorem:pointpushing} below.  This subgroup is also known as the point pushing subgroup.

\medskip

\noindent \emph{Related results.} 
Besides the result of Humphries already mentioned, there are various other results about subgroups of $\B_n$ generated by powers of basic elements.  Coxeter \cite[Section 10]{coxeter} showed that the normal closure in $\B_n$ of the $m$th power of any standard generator for $\B_n$ has finite index in $\B_n$ if and only if $1/n + 1/m \leq 1/2$; see also \cite{assion}.  Funar--Kohno \cite[Theorem 1.1]{FunarKohno} proved that the intersection over $m$ of the groups $\T_n[m]$ is trivial.  Humphries \cite[Theorem 1]{Humphries2} gave a complete description of when the group generated by (possibly differing) powers of Artin's (finitely many) generators for $\PB_n$ generates a subgroup of finite index; for instance, the group generated by the squares of Artin's generators has infinite index, in contrast to our Main Theorem.

Next, by evaluating the Burau representation at any $d$th root of unity we obtain an analogue of the integral Burau representation.  Building on work of  McMullen \cite{mcmullen},  Venkataramana \cite{Venk} showed that when $n/2 \geq d \geq 3$ the image of $\B_n$ is arithmetic and is (up to finite index) as large as can be expected.  Deligne--Mostow \cite{DM} previously gave analogues where the image is not arithmetic and Thurston \cite{Thurston} gave an interpretation of their work in terms of moduli spaces of convex polyhedra. 

Finally, there is a more general notion of a congruence subgroup of the braid group.  There is a natural action of $\B_n$ on the free group $F_n$, which can be identified with the fundamental group of the disk with $n$ punctures.  If $H$ is a characteristic subgroup of $F_n$, there is an induced homomorphism $\B_n \to \Aut(F_n/H)$ and the kernel is called a congruence subgroup of $\B_n$.  It is a theorem of Asada \cite{asada} that every finite-index subgroup of $\B_n$ contains such a congruence subgroup; see also \cite{ddh}.  Thurston later gave a more elementary proof; see the exposition by McReynolds \cite{mcr}.

\p{Acknowledgments} We would like to thank Joan Birman, Neil Fullarton, Louis Funar, Lalit Jain, Joseph Rabinoff, Douglas Ulmer, Kirsten Wickelgren, and Mante Zelvyte for helpful conversations.  We would especially like to thank David Benson, Andrew Putman, and Nick Salter who pointed out mistakes in earlier versions and made other useful comments.  We are also grateful to Jordan Ellenberg; a conversation with him on MathOverflow partly inspired the work in the last section.


\section{The Burau representation and a theorem of Arnol'd}
\label{section:back}

In this section we give a description of the integral Burau representation in terms of mapping class groups and also explain the classical result of Arnol'd that $\B_n[2]$ is equal to $\PB_n$.  Then we give a reinterpretation of the first equality of our Main Theorem in terms of moduli spaces of points in $\C$.

\subsection{The Burau representation}\label{sec:burau} Let $\sigma_1,\dots,\sigma_{n-1}$ denote the standard generators for $\B_n$.  The (unreduced) Burau representation is the representation $\B_n \to \GL_n(\Z[t,t^{-1}])$ defined by 
\[ \sigma_i \mapsto I_{i-1} \oplus \left(\begin{smallmatrix} 1-t & \,t \\ 1 & \,0\end{smallmatrix}\right) \oplus I_{n-i-1}. \]
This representation obviously fixes the vector $(1,1,\dots,1)$ and this gives a 1-dimensional summand.   The other summand is called the reduced Burau representation.

\begin{figure}
\labellist
\small\hair 2pt
 \endlabellist
\includegraphics[scale=1.25]{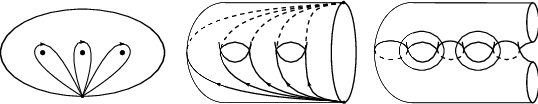}
\caption{\emph{Left to right:} the $\gamma_i$ in $\D_3$, the lifts of the $\gamma_i$ to $X_5$, and the curves in  $X_6$ whose Dehn twists lift the standard generators for $\B_6$}
\label{figure:burau}
\end{figure}

The Burau representation can also be described via topology.  Let $\D_n^\circ$ denote the punctured disk obtained from $\D_n$ by removing the marked points and let $p \in \partial \D_n^\circ$.  Let $Y_n$ denote the universal abelian cover of $\D_n^\circ$, let $t$ denote a generator for the deck transformation group, and let $\tilde p$ denote the full preimage of $p$.  As a $\Z[t,t^{-1}]$-module, $H_1(Y_n,\tilde p;\Z)$ has rank $n$; the generators are represented by path lifts to $Y_n$ of the loops $\gamma_i$ in $\D_n^\circ$ shown in the left-hand side of Figure~\ref{figure:burau} (so the vector $(1,1,\dots,1)$ corresponds to a peripheral loop).  Since the cover $Y_n$ is characteristic, each element of $\B_n$ induces a $t$-equivariant homeomorphism of $Y_n$ and the induced action on $H_1(Y_n,\tilde p;\Z)$ is nothing other than the Burau representation.

\p{The integral Burau representation} As mentioned, the integral Burau representation is the representation $\rho : \B_n \to \GL_n(\Z)$  obtained by evaluating the Burau representation $\B_n \to \GL_n(\Z[t,t^{-1}])$ at $t=-1$.

We can again describe this representation from the topological point of view.  We consider the two-fold branched cover $X_n \to \D_n$ with branch locus equal to the set of marked points.  If $n=2g+1$ then $X_n$ is a compact orientable surface $S_g^1$ of genus $g$ with one boundary component, and if $n=2g+2$ then $X_n$ is a compact orientable surface $S_g^2$ of genus $g$ with two boundary components.  

Again because the (branched) cover $X_n \to \D_n$ is characteristic, each element of $\Mod(\D_n) \cong \B_n$ lifts to a (unique) element of $\Mod(X_n)$ and there is an induced homomorphism $\Mod(\D_n) \to \Mod(X_n)$; denote the image by $\SMod(X_n)$.  It is a special case of a theorem of Birman and Hilden \cite{birmanhilden} that this homomorphism is injective, but we will not use this fact.

Let $\tilde p=\{p_1,p_2\}$ be the preimage in $\partial X_n$ of $p$.  Then $H_1(X_n,\tilde p;\Z) \cong \Z^n$.  Indeed, a basis for $H_1(X_n,\tilde p;\Z)$ is given by the path lifts of the $\gamma_i$; see the middle of Figure~\ref{figure:burau}. We claim that the composition 
\[ \B_n \to \Mod(X_n) \to \Aut(H_1(X_n,\tilde p;\Z)) \subseteq \GL_n(\Z) \]
is again the integral Burau representation.  This can be easily checked by directly computing the action of each of the standard generators for $\B_n$.  Alternatively, one can show that the kernel of the map $H_1(Y_n,\tilde p;\Z) \to H_1(X_n,\tilde p;\Z)$ induced by the natural map $Y_n \to X_n$ is generated by elements of the form $tx+x$; this plus the fact that the lifts of an element of $\Mod(\D_n)$ to $X_n$ and $Y_n$ are compatible gives the claim (the map $Y_n \to X_n$ is not surjective but is a covering map of $Y_n$ onto its image).

We can easily see from the latter description of the integral Burau representation that the reduced integral Burau representation of $\B_{2g+1}$ is symplectic.  Indeed, $H_1(S_g^1,\tilde p;\Z)$ naturally splits as $H_1(S_g^1;\Z) \oplus \Z$ and the first factor carries a symplectic form---the algebraic intersection number---which is preserved by $\B_n$.   The algebraic intersection form on $H_1(S_g^2,\tilde p;\Z)$ is already symplectic, and so the unreduced Burau representation of $\B_{2g+2}$ is symplectic (and reducible).  

Symplectic bases $(\vec x_i,\vec y_i)$ for both cases are shown in Figure~\ref{figure:basis}.  The algebraic intersection number $\hat\imath(\vec x_k,\vec y_k)$ equals 1 for all $k$ and all other algebraic intersections between basis elements are zero.  Throughout, we refer to these bases $\{\vec x_i,\vec y_i\}$ as the standard symplectic bases for $H_1(S_g^1;\Z)$ and $H_1(S_g^2,\tilde p;\Z)$.  For the second case, notice that each boundary component represents the basis element $\vec y_{g+1}$ and so the integral Burau representation can be regarded as a representation
\[ \rho : \B_n \to \begin{cases} \Sp_{2g}(\Z) & n=2g+1 \\ (\Sp_{2g+2}(\Z))_{\vec y_{g+1}} & n = 2g+2 \end{cases} \]
(in the case $n=2g+1$ we have dropped the trivial summand).

\begin{figure}
\labellist
\small\hair 2pt
 \pinlabel {$\vec y_1$} [ ] at 16 10
 \pinlabel {$\vec y_2$} [ ] at 43 10
 \pinlabel {$\vec y_3$} [ ] at 70 10
 \pinlabel {$\vec y_1$} [ ] at 133 10
 \pinlabel {$\vec y_2$} [ ] at 160 10
 \pinlabel {$\vec y_3$} [ ] at 187 10
 \pinlabel {$\vec y_4$} [ ] at 214 10
 \pinlabel {$\vec x_1$} [ ] at 24 39
 \pinlabel {$\vec x_2$} [ ] at 51 39
 \pinlabel {$\vec x_3$} [ ] at 78 39
 \pinlabel {$\vec x_1$} [ ] at 141 39
 \pinlabel {$\vec x_2$} [ ] at 168 39
 \pinlabel {$\vec x_3$} [ ] at 195 39
 \pinlabel {$\vec x_4$} [ ] at 210 38
\endlabellist
\includegraphics[scale=1.25]{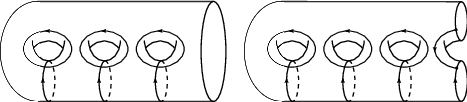}
\caption{The standard symplectic bases for $H_1(S_g^1;\Z)$ and $H_1(S_g^2,\tilde p;\Z)$}
\label{figure:basis}
\end{figure}


\subsection{The pure braid group as a congruence subgroup} We now explain the theorem of Arnol'd that $\PB_n = \B_n[2]$.  There is a canonical basis for $H_1(\D_n^\circ;\Z/2)$ whose elements correspond to the $n$ punctures (these are represented by the $\gamma_i$ above).  Let $H_1(\D_n^\circ;\Z/2)^{\even}$ denote the subspace consisting of elements with an even number of nonzero coordinates in the standard basis.

We would like to define a map $H_1(X_n;\Z/2) \to H_1(\D_n^\circ;\Z/2)$ as follows: given a mod two cycle in $X_n$, we modify it by homotopy so that it avoids the fixed points of $\iota$ and then project to $\D_n^\circ$.  A priori this is not well defined, because homotopies in $X_n$ might push a cycle across a fixed point.  Arnol'd proved that the map is indeed well defined \cite[Lemma 1]{arnold} and injective and that the image is $H_1(\D_n^\circ;\Z/2)^{\even}$ (see also \cite[Lemma 8.12 and footnote on p. 145]{mumford}).  The key point is that a simple closed curve in $X_n$ surrounding a fixed point maps to zero in $H_1(\D_n^\circ;\Z/2)$.

The isomorphism $H_1(\D_n^\circ;\Z/2)^{\even} \to H_1(X_n;\Z/2)$ is $\B_n$-equivariant, and so the elements of $\B_n$ that act trivially on $H_1(X_n;\Z/2)$ are exactly the ones that act trivially on $H_1(\D_n^\circ;\Z/2)^{\even}$.  For $n \geq 3$ these are the braids that fix each marked point of $\D_n$, namely, the pure braids.  Thus $\B_n[2] = \PB_n$.

\subsection{Moduli spaces} The first equality in our Main Theorem has an interpretation in terms of moduli spaces.  Let $\M_{n}^u$ denote the moduli space of configurations of $n$ (unlabeled) points in $\C$.   The double branched cover over such a configuration of points is an (open) hyperelliptic curve.  Such a curve admits a unique hyperelliptic involution, and so we can regard $\M_n^u$ as the moduli space of hyperelliptic curves.  The fundamental group of this moduli space is $\B_n$.

Next, let $\M_n$ denote the configuration space of $n$ labeled points in $\C$.  Because of the identification of $H_1(X_n;\Z/2)$ with $H_1(\D_n^\circ;\Z/2)^{\even}$, the ordering of the points in some configuration of points gives rise to a basis for the mod two homology of the associated hyperelliptic curve, namely, the differences of consecutive points in the configuration.  The fundamental group of $\M_n$ is $\PB_n$ and the forgetful map $\M_n \to \M_n^u$ is the covering map associated to the inclusion $\PB_n \to \B_n$.

Now let $n \geq 3$ and let $m$ be any positive even integer.  Given a point in $\M_n$, we may consider the associated open hyperelliptic curve $X$.  A \emph{hyperelliptic level $m$ marking} of $X$ is a basis for $H_1(X;\Z/m)$ whose mod two reduction is the canonical one given in the previous paragraph.  Let $\M_n[m]$ denote the moduli space of open hyperelliptic curves as above with hyperelliptic level $m$ markings (so $\M_n[2] = \M_n$).  The space $\M_n[m]$ is connected and has fundamental group $\B_n[m]$.  The forgetful map $\M_n[m] \to \M_n$ is the covering map corresponding to the inclusion $\B_n[m] \to \PB_n$.  

Since $\PB_n/\PB_n^2 = \PB_n/\B_n[4]$ is the universal 2-primary abelian quotient of $\PB_n$ we obtain the following corollary, also observed by Yu \cite[Corollary 7.4]{yu} in the case $n$ odd.

\begin{corollary}
\label{cor:universal}
For $n \geq 3$, the covering space $\M_n[4] \to \M_n$ is universal among 2-primary covering spaces of $\M_n$.
\end{corollary}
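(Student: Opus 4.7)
The plan is to deduce the corollary as a formal consequence of the first equality of the Main Theorem via the Galois correspondence between subgroups of $\pi_1(\M_n)$ and covers of $\M_n$. Since $\pi_1(\M_n) \cong \PB_n$, connected covers of $\M_n$ correspond to subgroups $H \leq \PB_n$, with regular covers corresponding to normal subgroups and deck group $\PB_n/H$ in that case. By construction, $\M_n[4]$ is the cover associated to the normal subgroup $\B_n[4]$, so its deck group is $\PB_n/\B_n[4]$.

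Next I would invoke the Main Theorem to identify $\B_n[4]$ with $\PB_n^2$. Since $\PB_n^2$ is by definition the kernel of $\PB_n \to H_1(\PB_n;\Z/2)$ (as recalled in the paragraph preceding the Main Theorem), the deck group $\PB_n/\B_n[4] \cong H_1(\PB_n;\Z/2)$ is an elementary abelian $2$-group; in particular $\M_n[4] \to \M_n$ is itself a $2$-primary cover. For universality, let $Y \to \M_n$ be any connected $2$-primary cover, corresponding to a normal subgroup $N \triangleleft \PB_n$ with $\PB_n/N$ of exponent $2$. By the universal property of $H_1(\PB_n;\Z/2) = \PB_n^{\mathrm{ab}} \otimes \Z/2$ as the largest quotient of $\PB_n$ of exponent $2$, the map $\PB_n \to \PB_n/N$ factors through $\PB_n \to H_1(\PB_n;\Z/2)$; hence $\B_n[4] = \ker\bigl(\PB_n \to H_1(\PB_n;\Z/2)\bigr) \subseteq N$. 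Translating back under the Galois correspondence yields a factorization $\M_n[4] \to Y \to \M_n$, proving universality.

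The only substantive ingredient is the Main Theorem itself (specifically, the equality $\B_n[4] = \PB_n^2$); the rest is a routine application of the covering space dictionary together with the characterization of $G^2$ as $\ker(G \to H_1(G;\Z/2))$. The \emph{hard part} has therefore already been isolated in the earlier section of the paper where the Main Theorem is established; the corollary itself presents no further obstacle.
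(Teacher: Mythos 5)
Your argument is correct and is essentially the paper's own proof: the authors deduce the corollary in one sentence from the observation that $\PB_n/\PB_n^2 = \PB_n/\B_n[4]$ is the universal 2-primary (exponent-two) quotient of $\pi_1(\M_n) \cong \PB_n$, which is exactly the Galois-correspondence argument you spell out in more detail.
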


The space $\M_n[4]$ has an algebro-geometric description as follows:
\[ \textrm{Spec}\ \C [t_i : 1 \leq i \leq n][(t_i-t_j)^{-1},\sqrt{t_i-t_j} : 1 \leq i < j \leq n];\]
this is the so-called K\"ummer cover of $\M_n$.  That these two covering spaces are isomorphic follows, for instance, from Corollary~\ref{cor:universal} and the fact that the deck groups are the same.


\section{Level four versus the mod two kernel}
\label{sec:level four mod 2}

The goal of this section is to prove the following proposition, which is one half of our Main Theorem.  

\begin{proposition}
\label{prop:level four vs kernel}
For any $n$, we have $\B_{n}[4]=\PB_n^2$.
\end{proposition}

For the case of $n=2g+1$, Proposition~\ref{prop:level four vs kernel} follows easily from the well-known facts Theorem~\ref{thm:acampo plus}(1) and Lemma~\ref{lemma:sp}(1) below.  As mentioned, in this case the observation was already made by Yu \cite[Proof of Corollary 7.4]{yu}.  Most of the work in this section is devoted to proving the analogs of these results for the case of $n=2g+2$, namely Theorem~\ref{thm:acampo plus}(2) and Lemma~\ref{lemma:sp}(2).  

Throughout this section, denote the symplectic form on $\Z^{2g}$ by $\hat\imath$ and fix a symplectic basis $\SB_g = \{ \vec x_1, \vec y_1,\dots,\vec x_g,\vec y_g\}$ with $\hat\imath(\vec x_k,\vec y_k)=1$ for all $k$.

\p{Symplectic transvections} The \emph{symplectic transvection} associated to $\vec v \in \Z^{2g}$ is the linear transformation $\tau_{\vec v} : \Z^{2g} \to \Z^{2g}$ given by
\[ \tau_{\vec v}(\vec w) = \vec w + \hat\imath(\vec w,\vec v)\vec v. \]
If $c$ is a simple closed curve in $S_g^1$ that with some choice of orientation represents $\vec v \in H_1(S_g^1;\Z)$, then the image of the Dehn twist $T_c$ in $\Sp_{2g}(\Z)$ is the transvection $\tau_{\vec v}$ (this makes sense because $\tau_{\vec v} = \tau_{-\vec v}$).

The first statement of the next proposition is a slight variation of a classical theorem found, for instance, in the book by Mumford \cite[Proposition A3]{tata1}.  (The modified generating set will make our pictures simpler in the proof of Theorem~\ref{thm:acampo plus}.)

\begin{proposition}
\label{prop:sp trans gens}
Let $g \geq 2$.
\begin{enumerate}
\item The group $\Sp_{2g}(\Z)[2]$ is generated by the $\tau_{\vec v}^2$ with $\vec v$ in
\[ \{\vec x_i\} \cup \{\vec y_j\} \cup \{\vec x_i + \vec x_j \} \cup \{\vec y_i - \vec y_j \} \cup \{\vec x_i - \vec y_j \}. \]
\item The group $\left(\Sp_{2g+2}(\Z)[2]\right)_{\vec y_{g+1}}$ is generated by the $\tau_{\vec v}^2$ with $\vec v$ in
\[ \{\vec x_i \mid i \neq g+1 \} \cup \{\vec y_j\} \cup \{\vec x_i + \vec x_j\mid i,j \neq g+1 \} \cup \{\vec y_i - \vec y_j \} \cup \{\vec x_i - \vec y_j \mid i \neq g+1 \}. \]
\end{enumerate}
\end{proposition}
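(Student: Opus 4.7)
For part (1), my plan is to invoke the classical generating set for $\Sp_{2g}(\Z)[2]$ from Mumford's \emph{Tata Lectures on Theta} (Proposition A.3), which consists of squared transvections along a very similar list of vectors differing from ours only in the signs of certain coefficients. To convert Mumford's generators into ours, I would use only the conjugation identity $A\tau_{\vec v}^2 A^{-1}=\tau_{A\vec v}^2$ (for $A \in \Sp_{2g}(\Z)$) together with $\tau_{-\vec v}=\tau_{\vec v}$: each generator on one side is the conjugate of one on the other by a product of squared transvections lying in both lists, so the two lists generate the same subgroup. This part is essentially a repackaging of Mumford's result.

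For part (2), my plan is to reduce to part (1) via the short exact sequence
\[
1 \longrightarrow N \longrightarrow (\Sp_{2g+2}(\Z))_{\vec y_{g+1}} \stackrel{\pi}{\longrightarrow} \Sp_{2g}(\Z) \longrightarrow 1
\]
coming from the action on the symplectic reduction $\vec y_{g+1}^{\perp}/\langle\vec y_{g+1}\rangle \cong \Z^{2g}$. A direct computation identifies $N$ as a Heisenberg-type group of rank $2g+1$ whose elements $A_{\vec w}$ are parameterized by $\vec w\in\vec y_{g+1}^\perp$, fix $\vec y_{g+1}$, send $\vec x_{g+1}\mapsto \vec x_{g+1}+\vec w$, and send each $\vec v\in\vec y_{g+1}^\perp$ to $\vec v+\hat\imath(\vec v,\vec w)\vec y_{g+1}$. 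Restricting to the level-two subgroup forces $\vec w \in 2\vec y_{g+1}^\perp$ and gives
\[
1 \longrightarrow N[2] \longrightarrow (\Sp_{2g+2}(\Z)[2])_{\vec y_{g+1}} \longrightarrow \Sp_{2g}(\Z)[2] \longrightarrow 1.
\]
Every vector in our list lies in $\vec y_{g+1}^\perp$, and the images under $\pi$ of the corresponding squared transvections are exactly the generators supplied by part (1) for the quotient $\Sp_{2g}(\Z)[2]$; in other words, $\pi$ already sends our list to a generating set of $\Sp_{2g}(\Z)[2]$.

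It then remains to show that $N[2]$ is contained in the group $G$ generated by our list. Since $N[2]$ is generated by $A_{\vec w}$ with $\vec w$ ranging over a $\Z$-basis of $2\vec y_{g+1}^\perp$, it suffices to recover each of $A_{2\vec y_{g+1}}$, $A_{2\vec y_j}$, and $A_{2\vec x_i}$ (for $i,j\neq g+1$) as a word in $G$. A one-line computation gives $A_{2\vec y_{g+1}}=\tau_{\vec y_{g+1}}^2$, and by tracking images under $\pi$ one sees that appropriate products of $\tau_{\vec y_j-\vec y_{g+1}}^2$ with powers of $\tau_{\vec y_j}^{2}$ and $\tau_{\vec y_{g+1}}^{2}$ land in $N[2]$ and realize $A_{\pm 2\vec y_j}$, while analogous products of $\tau_{\vec x_i-\vec y_{g+1}}^2$ with $\tau_{\vec x_i}^{2}$ and $\tau_{\vec y_{g+1}}^{2}$ realize $A_{\pm 2\vec x_i}$. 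The main obstacle is the non-abelian Heisenberg structure of $N$: commutators of the $A_{\vec w}$ are nontrivial powers of $\tau_{\vec y_{g+1}}$, which forces careful bookkeeping of exponents when one multiplies generators. Once the correct signs and orderings are chosen, however, every identification reduces to a short linear calculation on the standard symplectic basis.
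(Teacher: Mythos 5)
Your plan is sound and, once the calculations are written out, it does prove the proposition. Part (1) is essentially the paper's own argument: start from Mumford's list and conjugate the generators carrying the ``wrong'' signs into the desired ones. One small imprecision: not every conversion can be effected by conjugators drawn from the \emph{intersection} of the two lists, as you assert. For example, to turn $\tau_{\vec y_i+\vec y_j}^2$ into $\tau_{\vec y_i-\vec y_j}^2$ the paper conjugates by $\omega_j=\tau_{\vec x_j}^2\tau_{\vec y_j}^2\tau_{\vec x_j-\vec y_j}^2$, which negates $\vec x_j$ and $\vec y_j$; the factor $\tau_{\vec x_j-\vec y_j}^2$ lies only in the target list, so you must first show it belongs to the generated subgroup (via $\tau_{\vec x_j}^{2}\tau_{\vec x_j+\vec y_j}^2\tau_{\vec x_j}^{-2}=\tau_{\vec x_j-\vec y_j}^2$) before using it inside a conjugator. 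This is a one-line fix, not a gap, but as stated the claim that the conjugators lie in both lists is too strong.

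For part (2) you take a genuinely different, though closely related, route. The paper argues by direct column reduction: given $M\in\left(\Sp_{2g+2}(\Z)[2]\right)_{\vec y_{g+1}}$, it writes out $M(\vec x_{g+1})$ and multiplies by products such as $\tau_{\vec y_{g+1}-\vec x_i}^{-2c_i}\tau_{\vec x_i}^{2c_i}$ and a power of $\tau_{\vec y_{g+1}}^{2}$ until $M$ fixes $\vec x_{g+1}$ and $\vec y_{g+1}$, whence $M=M_g\oplus I_2$ and part (1) applies. You instead package the same content as the extension $1\to N[2]\to\left(\Sp_{2g+2}(\Z)[2]\right)_{\vec y_{g+1}}\to\Sp_{2g}(\Z)[2]\to 1$ with Heisenberg kernel, check that the listed transvections surject onto the part-(1) generators of the quotient, and realize the generators $A_{2\vec v}$ of $N[2]$ by products like $\tau_{\vec x_i-\vec y_{g+1}}^{\pm 2}\tau_{\vec x_i}^{\mp 2}\tau_{\vec y_{g+1}}^{\mp 2}$ --- which are, up to inversion, the very same words the paper uses in its reduction. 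Your identification of $N$, of $N[2]$ as the $A_{\vec w}$ with $\vec w\in 2\vec y_{g+1}^{\perp}$, and the identity $A_{2\vec y_{g+1}}=\tau_{\vec y_{g+1}}^{2}$ are all correct, and since $\vec y_{g+1}$ itself lies in $\vec y_{g+1}^{\perp}$ your generating set for $N[2]$ rightly includes the central direction, so the Heisenberg commutator corrections (even powers of $\tau_{\vec y_{g+1}}$) are absorbed. The trade-off: your version makes the group-theoretic structure of the kernel explicit and would generalize readily to other stabilizer subgroups, at the cost of the extra bookkeeping you acknowledge; the paper's reduction reaches the same splitting $M=M_g\oplus I_2$ without ever naming $N$.
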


\begin{proof}

According to Mumford, $\Sp_{2g}(\Z)[2]$ is generated by the $\tau_{\vec v}^2$ with $\vec v$ in
\[  \SB_{g}  \cup \{ \vec u + \vec w \mid \vec u,\vec w \in \SB_{g}, \vec u \neq \vec w \}. \]
For the first statement it suffices to show that we can replace the $\vec y_i+\vec y_j$ and $\vec x_i+\vec y_j$ in this generating set with the corresponding $\vec y_i-\vec y_j$ and $\vec x_i-\vec y_j$.

The first observation is that 
\[ \tau_{\vec x_i}^2 \tau_{\vec x_i + \vec y_i}^2 \tau_{\vec x_i}^{-2} = \tau_{\vec x_i - \vec y_i}^2 \]
and so we may replace the $\vec x_i + \vec y_i$ with the corresponding $\vec x_i - \vec y_i$.  

Next, we note that $\omega_i =  \tau_{\vec x_i}^2 \tau_{\vec y_i}^2 \tau_{\vec x_i - \vec y_i}^2$ negates $\vec x_i$ and $\vec y_i$ while fixing all other elements of $\SB_g$.  It follows that for $i \neq j$ we have
\begin{align*}
\omega_j \tau_{\vec y_i + \vec y_j}^2 \omega_j^{-1} &= \tau_{\vec y_i-\vec y_j}^2 \text{ and} \\
\omega_j \tau_{\vec x_i + \vec y_j}^2 \omega_j^{-1} &= \tau_{\vec x_i-\vec y_j}^2,
\end{align*}
and so we can replace the $\vec y_i + \vec y_j$ and $\vec x_i + \vec y_j$ with $\vec y_i-\vec y_j$ and $\vec x_i-\vec y_j$, as desired.

We proceed to the second statement.  Let $M \in  \left(\Sp_{2g+2}(\Z)[2]\right)_{\vec y_{g+1}}$.  Since $M$ fixes $\vec y_{g+1}$, is the identity modulo two, and preserves $\hat\imath(\vec x_{g+1},\vec y_{g+1})$, we have
\[ M(\vec x_{g+1}) = \left(2c_1\vec x_1 + 2d_1\vec y_1 + \cdots + 2c_g\vec x_g + 2d_g\vec y_g\right) +\vec x_{g+1} + 2d_{g+1} \vec y_{g+1}\]
for some $c_1,d_1,\dots,c_g,d_g,d_{g+1} \in \Z$.  
If for $i < g+1$ we apply the product $\tau_{\vec y_{g+1}-\vec x_i}^{-2c_i}\tau_{\vec x_i}^{2c_i}$ or $\tau_{\vec y_{g+1}-\vec y_i}^{-2d_i}\tau_{\vec y_i}^{2d_i}$, the effect is to eliminate the corresponding term $2c_i\vec x_i$ or $2d_i\vec y_i$ at the expense of changing the coeffidcient $d_{g+1}$.  We thus reduce to the case where $M(\vec x_{g+1})$ lies in $\langle \vec x_{g+1}, \vec y_{g+1} \rangle$ and where $M$ fixes $\vec y_{g+1}$.  By then applying a power of $\tau_{\vec y_{g+1}}^2$ we reduce to the case that $M$ fixes both $\vec x_{g+1}$ and $\vec y_{g+1}$.

Since $M$ preserves the symplectic form and fixes $\langle \vec x_{g+1}, \vec y_{g+1} \rangle$ it follows that $M$ preserves $\langle \vec x_{1}, \vec y_{1}, \dots, \vec x_{g}, \vec y_{g} \rangle$, that is, $M$ decomposes as a direct sum $M_g \oplus I_2$, where $M_g$ is the action on $\langle \vec x_{1}, \vec y_{1}, \dots, \vec x_{g}, \vec y_{g} \rangle$.  Since the image of the generating set from the first statement under the inclusion $\Sp_{2g}(\Z) \to \left(\Sp_{2g+2}(\Z)[2]\right)_{\vec y_{g+1}}$ is contained in the generating set from the second statement, the proposition follows.
\end{proof}

Below we show that $(\Z/2)^{2g+1 \choose 2}$ is a quotient of $\Sp_{2g}(\Z)[2]$ from which it follows that the generating set for $\Sp_{2g}(\Z)$ in Proposition~\ref{prop:sp trans gens} is minimal.  See the paper of Church and Putman \cite{CP} for minimal generating sets for other congruence subgroups of $\Sp_{2g}(\Z)$.

The first statement of the following theorem is due to A'Campo \cite[Th\'eor\`eme 1]{acampo} (see also Mumford \cite[Lemma 8.12]{mumford} and Wajnryb \cite[Theorem 1]{bw}).

\begin{figure}[htb]
\labellist
\small\hair 2pt
 \pinlabel {$c_{\vec y_2}$} [ ] at 41 63
 \pinlabel {$c_{\vec x_1}$} [ ] at 30 95
 \pinlabel {$c_{\vec x_4-\vec y_4}$} [ ] at 79 93
 \pinlabel {$c_{\vec x_2 + \vec x_4}$} [ ] at 179 64
 \pinlabel {$c_{\vec y_2 - \vec y_4}$} [ ] at 45 41
 \pinlabel {$c_{\vec x_2 - \vec y_4}$} [ ] at 177 11
\endlabellist
\centering
\includegraphics{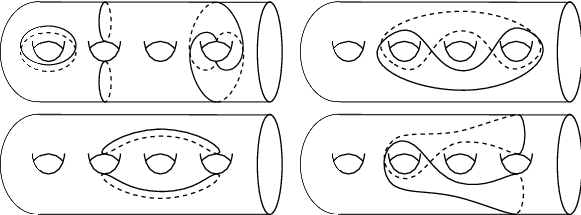}
\caption{The curves used in the proof of Theorem~\ref{thm:acampo plus}}
\label{figure:se}
\end{figure}

\begin{theorem}
\label{thm:acampo plus}
Let $g \geq 2$.
\begin{enumerate}
\item The restriction $\rho : \PB_{2g+1} \to \Sp_{2g}(\Z)[2]$ is surjective.
\item The restriction $\rho : \PB_{2g+2} \to \left(\Sp_{2g+2}(\Z)[2]\right)_{\vec y_{g+1}}$ is surjective.
\end{enumerate}
\end{theorem}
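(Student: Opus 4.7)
The plan is to leverage Proposition~\ref{prop:sp trans gens} and realize, for each transvection-square $\tau_{\vec v}^2$ in the generating set supplied there, an explicit pure braid whose image under $\rho$ is $\tau_{\vec v}^2$. Since those $\tau_{\vec v}^2$ generate the target group in each case, surjectivity follows immediately.

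The key geometric input is a lift calculation. Suppose $\gamma \subset \D_n$ is a simple closed curve bounding a disk that contains exactly two marked points. Then $T_\gamma \in \PB_n$, since it fixes each marked point. The preimage of $\gamma$ in $X_n$ under the branched double cover consists of a pair of simple closed curves cobounding an annulus: by Riemann--Hurwitz, the branched double cover of a disk with two branch points is itself an annulus. In particular the two preimage components are isotopic in $X_n$; let $c$ denote their common isotopy class. A direct check shows that the lift of $T_\gamma$ to $\SMod(X_n)$ is $T_c^2$, and hence $\rho(T_\gamma) = \tau_{[c]}^2$.

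It therefore suffices to exhibit, for each $\vec v$ in the generating set of Proposition~\ref{prop:sp trans gens}, a simple closed curve $c_{\vec v} \subset X_n$ representing $\vec v$ that is the preimage, in the sense above, of a two-marked-point-enclosing curve in $\D_n$. The curves required are exactly those displayed in Figure~\ref{figure:se}: small loops inside a single handle realize the transvections about $\vec x_i$ and $\vec y_j$, while longer $\iota$-invariant curves crossing two handles realize the combinations $\vec x_i + \vec x_j$, $\vec y_i - \vec y_j$, and $\vec x_i - \vec y_j$. For the case $n = 2g+2$, the restrictions $i, j \neq g+1$ imposed in Proposition~\ref{prop:sp trans gens} are natural and unavoidable here: the $(g+1)$st ``handle'' has been cut open into two boundary components of $S_g^2$, so one cannot run a curve crossing it as one would in the closed-handle case.

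The main obstacle is verification rather than construction: one must confirm that each curve displayed in Figure~\ref{figure:se} indeed represents the asserted class in the standard symplectic basis from Figure~\ref{figure:basis}, and that sign discrepancies (e.g., producing $\vec y_i - \vec y_j$ rather than $\vec y_i + \vec y_j$) can be arranged by a correct choice of the direction in which the curve traverses each of its two handles. Once this bookkeeping is carried out for the finitely many curve types in the figure---which are organized into families indexed by pairs $(i,j)$---both statements of the theorem follow uniformly from Proposition~\ref{prop:sp trans gens}.
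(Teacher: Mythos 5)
Your reduction to Proposition~\ref{prop:sp trans gens} and your lift computation for a curve $\gamma$ surrounding two marked points are both correct, and they do realize the generators $\tau_{\vec x_i}^2$ and $\tau_{\vec y_j}^2$. The gap is in the step ``it therefore suffices to exhibit, for each $\vec v$ in the generating set, a curve $c_{\vec v}$ that is the preimage of a two-marked-point-enclosing curve.'' Such a $c_{\vec v}$ exists only when the mod two reduction of $\vec v$, viewed in $H_1(\D_n^\circ;\Z/2)^{\even}$, has exactly two nonzero coordinates; this is A'Campo's characterization of symmetric homology classes, recalled in Section~5 of the paper. In the standard basis each $\vec x_i$ and $\vec y_j$ corresponds to a two-element set of marked points, so $\vec x_i+\vec x_j$ and $\vec y_i-\vec y_j$ (and $\vec x_i-\vec y_j$ for most pairs $i,j$) reduce to four-element sets and therefore admit no such representative. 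Concretely, an essential curve in $X_n$ that is setwise invariant under $\iota$ is either the preimage of an arc (and then its class reduces to a two-element set) or the preimage of a curve surrounding an odd number of marked points (and then it is separating, hence null-homologous); neither option can represent $\vec x_i+\vec x_j$. So the strategy of hitting every generator by a single $T_\gamma$ with $\gamma$ a two-point curve is unworkable, not merely unverified bookkeeping.

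The repair is to weaken the requirement from ``$c$ isotopic to $\iota(c)$'' to ``$c$ disjoint from $\iota(c)$.'' If $c\cap\iota(c)=\emptyset$, then $T_cT_{\iota(c)}$ commutes with $\iota$, hence lies in $\SMod(X_n)$ and corresponds to a pure braid, namely the twist about the image curve in $\D_n$, which surrounds four marked points when $c\not\simeq\iota(c)$. Since $\iota$ acts by $-1$ on homology, the symplectic image is $\tau_{[c]}\tau_{[\iota(c)]}=\tau_{\vec v}\tau_{-\vec v}=\tau_{\vec v}^2$, exactly as needed. The curves of Figure~\ref{figure:se} are chosen to be disjoint from, but in general not isotopic to, their $\iota$-images, and this is the argument the paper gives.
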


\begin{proof}[Proof of Theorem~\ref{thm:acampo plus}]

It suffices to realize each generator from parts (1) and (2) of Proposition~\ref{prop:sp trans gens}.  For each transvection $\tau_{\vec v}$ as in the proposition we can find a simple closed curve $c_{\vec v}$ lying in $S_g^1$ or $S_{g}^2$ accordingly and with the following properties:
\begin{enumerate}
\item for some choice of orientation of $c_{\vec v}$, we have $[\vec c_{\vec v}] = \vec v$ and
\item $\iota(c_{\vec v}) \cap c_{\vec v} = \emptyset$.
\end{enumerate}
The required curves are shown in Figure~\ref{figure:se} (for the second statement of the theorem we should imagine $S_g^2$ as lying inside $S_{g+1}^1$ and check that the required curves avoid $S_{g+1} \setminus S_g^2$).  It follows from the second condition that $T_{c_{\vec v}}T_{\iota(c_{\vec v})}$ lies in $\SMod(S_g^1)$ or $\SMod(S_g^2)$, and hence corresponds an element of the appropriate pure braid group ($\PB_{2g+1}$ or $\PB_{2g+2}$).  By the first condition, the image of this product in the appropriate symplectic group is $\tau_{\vec v}^2$, as desired.
\end{proof}

\p{The symplectic Lie algebra} Let $J$ denote the $2g \times 2g$ matrix associated to the symplectic form on $\Z^{2g}$ and let $j$ denote the mod 2 reduction.  Just as $\Sp_{2g}(\Z)$ is the group of integral matrices that satisfy $MJ=JM^T$, the group $\spl_{2g}(\Z/2)$ is the additive group of $2g \times 2g$ matrices $m$ with entries in $\Z/2$ and with $mj=jm^T$.  If we reorder the symplectic basis for $\Z^{2g}$ as $(\vec x_1,\dots,\vec x_g,\vec y_g,\dots,\vec y_1)$, then $\spl_{2g}(\Z/2)$ is the set of matrices over $\Z/2$ that are persymmetric (symmetric along the anti-diagonal).  To put it yet another way, these are the matrices where, for each $\vec v$ and $\vec w$ in the standard basis, the $\vec v \vec w$-entry is equal to the $\vec w^\star \vec v^\star$-entry (really these are the entries corresponding to the mod two reductions of those vectors).  

For $\vec v, \vec w \in \SB_g$, let $m_{\vec v \vec w}$ be the element of $\spl_{2g}(\Z/2)$ obtained from the zero matrix by replacing the $\vec v \vec w$- and $\vec w^\star \vec v^\star$-entries with 1 (if $\vec v = \vec w^\star$ this matrix has a single nonzero entry).
From the definition, we see that  $m_{\vec v \vec w} = m_{\vec w^\star \vec v^\star}$.  Clearly the $m_{\vec v \vec w}$ generate the abelian group $\spl_{2g}(\Z/2)$.

We remark that there is an isomorphism $\spl_{2g}(\Z/2) \to S^2((\Z/2)^{2g})$ given by $m_{\vec v \vec w} \mapsto \vec v \vec w^\star$.  From this or any of the other descriptions of $\spl_{2g}(\Z/2)$, we can easily check that $\spl_{2g}(\Z/2)$ is isomorphic to $(\Z/2)^{2g+1 \choose 2}$.

\p{An abelian quotient of the level 2 symplectic group}  Newman--Smart \cite[Theorem 7]{newmansmart} proved that there is a surjective homomorphism
\[
\psi : \Sp_{2g}(\Z)[2] \to \spl_{2g}(\Z/2)
\]
given by
\[
I_{2g}+2A \mapsto A \mod 2.
\]
Evidently, the kernel of $\psi$ is $\Sp_{2g}(\Z)[4]$.  So there is a short exact sequence
\[ 1 \to \Sp_{2g}(\Z)[4] \to \Sp_{2g}(\Z)[2] \stackrel{\psi}{\to} \spl_{2g}(\Z/2) \to 1. \]

We will need to describe the image of $\left(\Sp_{2g+2}(\Z)[2]\right)_{\vec y_{g+1}}$ under $\psi$.  For any $\vec v \in (\Z/2)^{2g}$, set
\[ \Ann(\vec v) = \{m \in \spl_{2g+2}(\Z/2) \mid m(\vec v) = 0 \}. \]
It is straightforward to check that the image of $\left(\Sp_{2g+2}(\Z)[2]\right)_{\vec y_{g+1}}$ under $\psi$ lies in $\Ann(\vec y_{g+1})$ and that $\Ann(\vec y_{g+1})$ is generated by
\[ \{ m_{\vec v \vec w} \mid \vec v,\vec w \in \{\vec x_1,\vec y_1, \dots, \vec x_{g+1},\vec y_{g+1}\},\  \vec v \neq \vec x_{g+1}, \ \vec w \neq \vec y_{g+1} \}.\]
In particular, $\Ann(\vec y_{g+1})$ is isomorphic to $(\Z/2)^{2g+2 \choose 2}$.

\begin{lemma}
\label{lemma:sp}
Let $g \geq 0$.   We have:
\begin{enumerate}
 \item The map $\psi : \Sp_{2g}(\Z)[2] \to \spl_{2g}(\Z/2)$ is surjective, and 
\item The map $\psi : \left(\Sp_{2g+2}(\Z)[2]\right)_{\vec y_{g+1}} \to \Ann(\vec y_{g+1})$ is surjective.
\end{enumerate}
\end{lemma}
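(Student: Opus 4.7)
My approach is to take the generating sets given by Proposition~\ref{prop:sp trans gens}, compute the image under $\psi$ of each generator, and verify that these images span the target. The key algebraic observation is that for any $\vec v \in \Z^{2g}$, the linear map $N_{\vec v} = \tau_{\vec v} - I$ acts by $\vec w \mapsto \hat\imath(\vec w, \vec v)\vec v$, and since $\hat\imath(\vec v,\vec v)=0$ we have $N_{\vec v}^2 = 0$. Therefore $\tau_{\vec v}^2 = I + 2N_{\vec v}$, and so $\psi(\tau_{\vec v}^2)$ is represented by the mod~2 reduction of $N_{\vec v}$, namely the $2g \times 2g$ matrix whose $(\vec u, \vec w)$-entry is the $\vec u$-coefficient of $\hat\imath(\vec w,\vec v)\vec v$ in the standard symplectic basis.

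For part~(1), I will plug each $\vec v$ from the list in Proposition~\ref{prop:sp trans gens}(1) into this formula. One finds $\psi(\tau_{\vec x_i}^2) = m_{\vec x_i \vec y_i}$ and $\psi(\tau_{\vec y_j}^2) = m_{\vec y_j \vec x_j}$, accounting for the diagonal-type generators of $\spl_{2g}(\Z/2)$. For the remaining vectors, $\psi(\tau_{\vec x_i+\vec x_j}^2)$, $\psi(\tau_{\vec y_i-\vec y_j}^2)$, and $\psi(\tau_{\vec x_i-\vec y_j}^2)$ each expand as a sum of the diagonal contributions just produced plus one new ``off-diagonal'' generator $m_{\vec x_i \vec x_j}$, $m_{\vec y_i \vec y_j}$, or $m_{\vec x_i \vec x_j}+m_{\vec x_j \vec y_i}+\text{(diagonal)}$ respectively. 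Because the earlier generators are already in the image, we can iteratively solve for each $m_{\vec v \vec w}$ as a $\Z/2$-combination of the computed $\psi(\tau_{\vec v}^2)$. A dimension count confirms the match: the generating set has $g + g + \binom{g}{2} + \binom{g}{2} + g^2 = g(2g+1) = \binom{2g+1}{2}$ elements, agreeing with $\dim_{\Z/2}\spl_{2g}(\Z/2)$.

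Part~(2) is an adaptation of the same computation inside $\spl_{2g+2}(\Z/2)$. The key point is that every vector $\vec v$ appearing in Proposition~\ref{prop:sp trans gens}(2) has zero $\vec x_{g+1}$-coefficient, so $\hat\imath(\vec y_{g+1},\vec v)=0$; hence $N_{\vec v}(\vec y_{g+1})=0$ and $\psi(\tau_{\vec v}^2) \in \Ann(\vec y_{g+1})$. The same case-by-case analysis then shows that the images exhaust each generator $m_{\vec v \vec w}$ with $\vec v \neq \vec x_{g+1}$ and $\vec w \neq \vec y_{g+1}$, i.e.\ all of $\Ann(\vec y_{g+1})$, and the head count $g + (g+1) + \binom{g}{2} + \binom{g+1}{2} + g(g+1) = (g+1)(2g+1) = \binom{2g+2}{2}$ again matches the dimension of the target.

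I expect the main obstacle to be purely bookkeeping: the matching dimension count is suggestive but not sufficient, so I must exhibit each persymmetric basis element $m_{\vec v \vec w}$ explicitly as a $\Z/2$-combination of the $\psi(\tau_{\vec v}^2)$. The ``off-diagonal'' cases $\vec v = \vec x_i - \vec y_j$ with $i=j$ versus $i \neq j$ require separate treatment, and for part~(2) one must double-check that the restrictions on the indices in Proposition~\ref{prop:sp trans gens}(2) still leave enough generators to recover every cross-term $m_{\vec v \vec w}$ allowed by the constraints $\vec v \neq \vec x_{g+1}$, $\vec w \neq \vec y_{g+1}$; the persymmetry relation $m_{\vec v \vec w} = m_{\vec w^\star \vec v^\star}$ is what makes this possible.
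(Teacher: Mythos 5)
Your proposal is correct and takes essentially the same approach as the paper: both rest on the computation $\tau_{\vec v}^2 = I + 2(\tau_{\vec v}-I)$ (valid since $(\tau_{\vec v}-I)^2=0$), and the paper's explicit preimages $M_{\vec v \vec w} = \tau_{\vec w^\star+\vec v}^{-2}\tau_{\vec w^\star}^2\tau_{\vec v}^2$ are exactly the multiplicative form of your ``iteratively solve for the cross-term'' step, with the same observation in part~(2) that the relevant elements fix $\vec y_{g+1}$. One label needs correcting when you do the bookkeeping: the new generator produced by $\tau_{\vec x_i+\vec x_j}^2$ is $m_{\vec x_i \vec y_j}=m_{\vec x_j \vec y_i}$ rather than $m_{\vec x_i \vec x_j}$ (the latter, with entries at $(\vec x_i,\vec x_j)$ and $(\vec y_j,\vec y_i)$, is the cross-term of $\tau_{\vec x_i-\vec y_j}^2$ for $i\neq j$, and $m_{\vec x_i\vec x_i}=m_{\vec y_i\vec y_i}$ comes from $\tau_{\vec x_i-\vec y_i}^2$), after which the elimination and the dimension counts go through exactly as you describe.
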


\begin{proof}

As we already said, the first statement is well known.  However, we give a proof for completeness and to establish some notation needed for the second case.
Specifically, for any choice of $\vec v$ and $\vec w$ in the standard basis $\{\vec x_1,\vec y_1,\dots,\vec x_g,\vec y_g\}$, we would like to define an element $M_{\vec v \vec w}$ of $\Sp_{2g}(\Z)[2]$ whose image is the matrix $m_{\vec v \vec w}$ given above.  Let $N_{\vec v}$ denote the matrix obtained from the identity by negating the $\vec v \vec v$- and $\vec v^\star \vec v^\star$-entries.  We set

\[
M_{\vec v \vec w} = 
\begin{cases}
N_{\vec v} & \vec v = \vec w \\
\tau_{\vec v}^2 & \vec v = \vec w^\star \\
\tau_{\vec w^\star+\vec v}^{-2}\tau_{\vec w^\star}^2 \tau_{\vec v}^2 & \text{otherwise}. 
\end{cases}
\]

It is straightforward to check that the image of each $N_{\vec v \vec w}$ is the desired matrix $m_{\vec v \vec w}$.  Since the $m_{\vec v \vec w}$ generate $\spl_{2g}(\Z/2)$, the first statement follows.

We already said that the $m_{\vec v \vec w}$ with $\vec v \neq \vec x_{g+1}$ and $\vec w \neq \vec y_{g+1}$ generate $\Ann(\vec y_{g+1})$ and so for the second statement it suffices to note that the corresponding matrices $M_{\vec v \vec w}$ lie in $(\Sp_{2g+2}(\Z)[2])_{\vec y_{g+1}}$, which follows immediately from the definitions.
\end{proof}

\begin{proof}[Proof of Proposition~\ref{prop:level four vs kernel}]

First we treat the case of $n=2g+1$.  Any map from a group to a 2-primary abelian group factors through its universal $\Z/2$-vector space quotient, and so there is a commutative diagram

\[ 
\xymatrix{
\PB_{2g+1} \ar[r]^{\rho} \ar[dr]_{\alpha} &\Sp_{2g}(\Z)[2] \ar[r]^{\hspace*{-7ex}\psi} & \spl_{2g}(\Z/2) \cong  (\Z/2)^{2g+1 \choose 2} \\
& (\Z/2)^{2g + 1 \choose 2} \ar[ur]_\beta &
}
\]

Since $\psi \circ \rho$ is surjective (Theorem~\ref{thm:acampo plus}(1) and Lemma~\ref{lemma:sp}(1)), it follows that $\beta$ is an isomorphism.  Thus $\ker (\psi \circ \rho)  = \ker \alpha$.  We already said that $\ker \alpha = \PB_{2g+1}^2$.  Since $\ker \psi = \Sp_{2g}(\Z)[4]$ and $\B_{2g+1}[4] \subseteq \B_{2g+1}[2] = \PB_{2g+1}$ we have $\ker (\psi \circ \rho) = \B_{2g+1}[4]$.  This completes the proof in the case $n$ odd.

For $n=2g+2$ even the proof is the same except that we use Theorem~\ref{thm:acampo plus}(2), Lemma~\ref{lemma:sp}(2), and the fact that  $\Ann(\vec y_{g+1}) \cong (\Z/2)^{2g+2 \choose 2}$.
\end{proof}


\section{Squares of twists versus the mod two kernel}
\label{sec:sql}

Combined with Proposition~\ref{prop:level four vs kernel} the following proposition gives the Main Theorem.  

\begin{proposition}
\label{prop:squares versus kernel}
For any $n$ we have $\T_{n}[2] = \PB_n^2$.
\end{proposition}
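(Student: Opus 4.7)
The plan is to establish the non-obvious containment $\PB_n^2 \subseteq \T_n[2]$; the reverse is immediate because every Dehn twist $T_c \in \Mod(\D_n)$ fixes each marked point of $\D_n$ individually and so already lies in $\PB_n$, making each generator $T_c^2$ of $\T_n[2]$ tautologically a square of a pure braid. Since $\PB_n^2$ is the kernel of the natural map $\PB_n \to H_1(\PB_n;\Z/2)$, and hence the smallest normal subgroup with elementary abelian $2$-quotient, the desired containment is equivalent to $Q := \PB_n/\T_n[2]$ being an elementary abelian $2$-group.

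Two features of $Q$ are easy. First, $\T_n[2]$ is normal in $\B_n$ because $\gamma T_c^2 \gamma^{-1} = T_{\gamma(c)}^2$ for every $\gamma \in \B_n$. Second, $\PB_n = \T_n[1]$ by Artin's theorem, so $Q$ is generated by images of Dehn twists, each of which squares to the identity. The whole proof therefore reduces to showing that $Q$ is \emph{abelian}, i.e., that $[T_c, T_d] \in \T_n[2]$ for every pair of simple closed curves $c, d$ in $\D_n$. I would attack this by case analysis on $i(c,d)$: if $c$ and $d$ may be isotoped to be disjoint then the commutator vanishes on the nose; otherwise one exploits standard mapping-class-group identities---the braid relation $T_c T_d T_c = T_d T_c T_d$ when $i(c,d)=1$, the $2$-chain relation $(T_c T_d)^6 = T_\delta$, and the lantern relation---together with the conjugation formula $T_{\gamma(c)} = \gamma T_c \gamma^{-1}$, to express $[T_c, T_d]$ as an explicit product of squares of Dehn twists. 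Equivalently, since $[\PB_n,\PB_n]$ is normally generated in $\PB_n$ by the commutators of Artin's finitely many generators $A_{ij}$, it suffices to verify $[A_{ij}, A_{kl}] \in \T_n[2]$ in each of the finitely many index configurations.

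The main obstacle is this last step. The braid relation alone, taken modulo $\T_n[2]$, only yields that $\langle \bar T_c, \bar T_d \rangle$ is a quotient of $\langle x, y \mid x^2 = y^2 = (xy)^3 = 1 \rangle \cong S_3$, which is non-abelian. So the argument must bring in additional squares of Dehn twists---about curves such as $T_c(d)$, $T_d(c)$, or the boundary of a regular neighborhood of $c \cup d$ supplied by the $2$-chain or lantern relation---in order to force $\bar T_c$ and $\bar T_d$ to commute in $Q$.
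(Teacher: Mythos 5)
Your overall reduction is sound and in fact mirrors the paper's own strategy: both containments come down to (i) computing the image of $\T_n[2]$ in the abelianization of $\PB_n$ and (ii) showing that the commutator subgroup $\PB_n'$ lies in $\T_n[2]$, which by normality of $\T_n[2]$ in $\B_n$ reduces to finitely many commutators of Artin generators. But there is a genuine gap exactly where you flag ``the main obstacle'': you never exhibit $[a_{12},a_{13}]$ (or any commutator of twists about intersecting curves) as a product of squares of Dehn twists, and the standard relations you invoke will not produce it. The braid relation is not even available here, since two Artin curves $c_{ij}$, $c_{k\ell}$ have geometric intersection number $0$, $2$, or $4$, never $1$. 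What is needed is a specific new identity, and it carries essentially all of the content of the proposition. The paper supplies it as the \emph{squared lantern relation} $[T_a,T_b]=T_a^2T_d^2T_c^2T_e^{-2}$ for an explicit configuration of five curves in $\D_3$, proved by applying the point-pushing antihomomorphism $\Push:\pi_1(\D_3',p)\to\PMod(\D_3)$ to the free-group identity $[\gamma,\delta]=(\gamma\delta)^2(\gamma\delta)^{-1}\gamma^{-2}(\gamma\delta)\delta^{-2}$. Without producing such a relation (or verifying one by a word-problem computation), the claim $[a_{12},a_{13}]\in\T_n[2]$ remains an assertion, not a proof.

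A second, smaller omission: even granting the intersection-number-$2$ case, your case analysis does not account for the orbit of pairs with geometric intersection number $4$, represented by $(a_{13},a_{24})$; these curves are not disjoint, and the commutator is not covered by the $[a_{12},a_{13}]$ computation alone. The paper handles this by expanding the pure-braid relator $[a_{23}a_{13}a_{23}^{-1},a_{24}]$ via the Witt--Hall identity $[xy,z]=x[y,z]x^{-1}[x,z]$, thereby writing $[a_{13},a_{24}]$ as a product of $\B_n$-conjugates of commutators from the intersection-number-$2$ orbit. You would need to supply an argument of this kind before the normal generation step closes the proof.
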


In order to prove Proposition~\ref{prop:squares versus kernel}, we need a certain relation amongst Dehn twists in $\D_3$.  The configuration of curves involved in this relation is reminiscent of the lantern relation and most of the twists involved are squared; hence we refer to it as the {\it squared lantern relation}.  

\begin{proposition}[The squared lantern relation]
\label{proposition:squaredlantern}
Let $a$, $b$, $c$, $d$, and $e$ be the curves in $\D_3$ shown in Figure~\ref{figure:sql}.   The following relation holds in $\Mod(\D_3)$:
\[ [T_a,T_b]  = T_a^2 T_d^2 T_c^2 T_e^{-2}.  \]
\end{proposition}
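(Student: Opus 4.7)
Since $\Mod(\D_3) \cong \B_3 = \langle \sigma_1, \sigma_2 \mid \sigma_1\sigma_2\sigma_1 = \sigma_2\sigma_1\sigma_2\rangle$, the relation to prove is an identity in a very small and well-understood group, so the most direct plan is a finite verification. The first step would be to read off Figure~\ref{figure:sql} to identify each of $T_a, T_b, T_c, T_d, T_e$ as an explicit braid word. Twists about simple closed curves in $\D_3$ correspond to conjugates of powers of the standard generators: for a curve enclosing two adjacent marked points, one gets $\sigma_i^{\pm 2}$, and for a curve enclosing a non-adjacent pair one gets a $\sigma_j$-conjugate of such. Orientation (sign) choices can be pinned down by a single test, e.g.\ checking the induced permutation on marked points.

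Once each twist is expressed as a word in $\sigma_1, \sigma_2$, the identity
\[ [T_a,T_b] = T_a^2 T_d^2 T_c^2 T_e^{-2} \]
becomes an equation between two words in $\B_3$, which can be reduced to a trivial word using only the braid relation $\sigma_1\sigma_2\sigma_1 = \sigma_2\sigma_1\sigma_2$ together with the fact that $(\sigma_1\sigma_2)^3$ is central (so one may work modulo the center in $\mathrm{PSL}_2(\Z) \cong \Z/2 \ast \Z/3$ to shorten the check). This is a finite, mechanical computation.

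An alternative, more structural plan is to derive the squared lantern from the ordinary lantern. The standard lantern relation in a sphere with four boundary components specializes, in $\Mod(\D_3)$, to an identity of the form $T_{\alpha}T_{\beta}T_{\gamma} = T_{\partial \D_3}$, since Dehn twists about curves bounding a single marked point are trivial. Conjugating this base relation by a twist (say $T_a$, producing a parallel identity in which $\beta$ and $\gamma$ are replaced by $T_a(\beta)$ and $T_a(\gamma)$), then dividing one relation by the other, yields a commutator $[T_a, T_\beta T_\gamma]$ equal to a product of Dehn twists. Expanding using $T_a T_\beta T_a^{-1} = T_{T_a(\beta)}$ and recognizing $T_a(\beta), T_a(\gamma)$ as curves from Figure~\ref{figure:sql} should give exactly the claimed expression, with the squared exponents coming from the fact that the relevant twists (like $T_a^2$) appear as $T_a T_a = T_a \cdot T_a$ after the commutator cancellation.

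The main obstacle is the bookkeeping of matching the geometric labels in Figure~\ref{figure:sql} with braid words or with a deformed lantern configuration; the underlying algebra is short either way. I would favor the first approach, as it gives a short, verifiable computation in $\B_3$ and avoids the risk of misidentifying the deformation needed in the lantern-based route.
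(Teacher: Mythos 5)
Your favored route (turn every twist into an explicit word in $\sigma_1,\sigma_2$ and grind the identity out in $\B_3$) is sound in principle, and the paper itself concedes as much: the authors remark that the relation ``can be verified using any of the standard solutions to the word problem'' for $\B_3$. But they deliberately do something different. They use the push map $\Push : \pi_1(\D_3',p) \to \PMod(\D_3)$ (an antihomomorphism coming from the Birman exact sequence), observe that for the two generating loops $\gamma,\delta$ of $\pi_1(\D_3',p)\cong F_2$ one has $\Push(\gamma)=T_b^{-1}$, $\Push(\delta)=T_a^{-1}$, and $\Push(\gamma\delta)=T_cT_e^{-1}$, and then push forward the free-group identity $[\gamma,\delta]=(\gamma\delta)^2(\gamma\delta)^{-1}\gamma^{-2}(\gamma\delta)\delta^{-2}$, finishing with $T_cT_e^{-1}(b)=d$. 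What their approach buys is twofold: it explains \emph{why} the relation holds (it is a shadow of a relation in $F_2$), and it sets up the push-map machinery reused in Section 5; what your approach buys is a mechanical, independently checkable certificate. Two caveats on your write-up. First, as it stands you have only a plan, not a proof: you never identify $a,b,c,d,e$ as braid words, so the ``finite, mechanical computation'' is not actually performed, and the signs/conjugators are exactly where such a computation can silently go wrong. Second, your alternative lantern-based sketch is the weaker of your two options: in $\D_3$ the lantern degenerates to $T_{c_{12}}T_{c_{13}}T_{c_{23}}=T_{\partial\D_3}$ with $T_{\partial\D_3}$ central, so conjugating by $T_a$ and dividing tends to produce tautologies rather than the commutator identity with the specific curve $d=T_cT_e^{-1}(b)$; you would need a genuinely new input (essentially the push-map computation above) to recover the stated form.
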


\begin{figure}[h!]
\labellist
\small\hair 2pt
 \pinlabel {$a$} [ ] at 39 64
 \pinlabel {$b$} [ ] at 56 26
 \pinlabel {$c$} [ ] at 69 64
 \pinlabel {$d$} [ ] at 53 82
 \pinlabel {$e$} [ ] at 97 97
 \pinlabel {$\gamma$} [ ] at 217 68
 \pinlabel {$\delta$} [ ] at 182 68
\endlabellist
\includegraphics{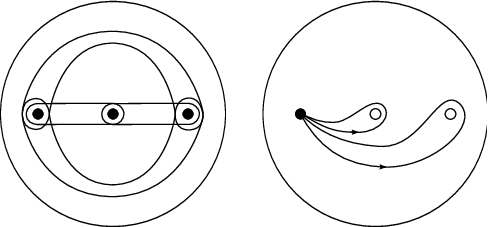}
\caption{The curves in the squared lantern relation and the loops used in the proof}
\label{figure:sql}
\end{figure}

Let $\sigma_1,\dots,\sigma_{n-1}$ denote the standard generators for the braid group $\B_n$.  For $1 \leq i < j \leq n$ let $a_{ij} = \omega^{-1} \sigma_i^2 \omega$, where $\omega = \sigma_{i+1} \cdots \sigma_{j-1}$ (elements of $\B_n$ are composed left to right).  Artin proved that the $a_{ij}$ generate $\PB_n$.

Each $a_{ij}$ is equal to a Dehn twist about a curve $c_{ij}$ in $\D_n$ surrounding two marked points.  If we place the marked points of $\D_n$ in a horizontal line, and if we choose the $\sigma_i$ to be right-handed half-twists, then $c_{ij}$ is the boundary of a regular neighborhood of an arc whose interior lies below the line and connects the $i$th marked point to the $j$th.  

  We can reinterpret the squared lantern relation in terms of Artin's generators as follows:
\[   [a_{12},a_{13}] = a_{12}^2 (a_{12}^{-1}a_{13}^2a_{12})a_{23}^2 (a_{13}a_{12}a_{23})^{-2}  \]
(in the braid group elements are multiplied left to right).

\p{Push maps} While Proposition~\ref{proposition:squaredlantern} can be verified using any of the standard solutions to the word problem for either the mapping class group, the braid group, or the pure braid group, we will give here a conceptual proof.  The ideas we develop here will also be used in the next section.

Choose one marked point of $\D_n$, call it $p$, and delete the other $n-1$ marked points from $\D_n$.  Denote the resulting disk with $n-1$ punctures and one marked point by $\D_n'$.  There is a \emph{push map}:
\[ \Push : \pi_1(\D_n',p) \to \PMod(\D_n) \cong \PB_n\]
defined as follows.  Given $\gamma \in \pi_1(D_n',p)$, we choose an isotopy of $p$ that pushes $p$ along $\gamma$ and we extend this to an isotopy of $\D_n'$.  At the end of the isotopy there is an induced homeomorphism of $\D_n'$, hence $\D_n$, whose homotopy class is $\Push(\gamma)$ (see \cite[Section 4.2]{primer} for details).  

If $\gamma$ has a simple representative $w$ with regular neighborhood $A$ in $\D_n'$, then $\Push(\gamma)$ is equal to the product $T_\ell T_r^{-1}$, where $\ell$ and $r$ denote the components of $\partial A$ lying to the left and right of $\gamma$, respectively (see \cite[Fact 4.7]{primer}).  It is sometimes the case that one of $\ell$ or $r$ is inessential, in which case we can omit the corresponding trivial Dehn twist.   Since products in $\pi_1(\D_n',p)$ are usually written left to right, the map $\Push$ is an antihomomorphism. 

\begin{proof}[Proof of Proposition~\ref{proposition:squaredlantern}]

Choose the marked point $p$ as in the right-hand side of Figure~\ref{figure:sql}.  As above there is a map $\Psi:\pi_1(\D_3',p) \to \PMod(\D_3)$.  Let $\gamma$ and $\delta$ be the two elements of $\pi_1(\D_3',p)$ indicated in the same figure; these generate the free group $\pi_1(\D_3',p)\cong F_2$.  As above we have
\[ \Push(\gamma) = T_b^{-1} , \ \  \Push(\delta) = T_a^{-1} , \ \text{ and} \ \ \Push(\gamma\delta) = T_c T_e^{-1}. 
\]
In the free group $\pi_1(\D_3',p)$, we can write
\[
[\gamma,\delta] = (\gamma\delta)^2 (\gamma\delta)^{-1}\gamma^{-2}(\gamma\delta)\delta^{-2}.
\]
Applying the antihomomorphism $\Push$ to the left-hand side, we obtain the commutator $[T_a, T_b]$.
Applying $\Push$ to the right-hand side and using the above descriptions of $\Push(\gamma)$, $\Push(\delta)$, and $\Push(\gamma\delta)$ in terms of Dehn twists (and remembering that $\Push$ is an antihomomorphism), we obtain
\begin{eqnarray*}
\Push(\delta)^{-2} \Push(\gamma\delta) \Push(\gamma)^{-2} \Push(\gamma\delta)^{-1} \Push(\gamma\delta)^2 
= T_a^2 (T_c T_e^{-1}) T_b^2 (T_c T_e^{-1})^{-1} (T_c T_e^{-1})^2
\end{eqnarray*}
Using now the formula $fT_bf^{-1} = T_{f(b)}$, the fact that $T_c T_e^{-1}(b) = d$, and the fact that $T_c$ and $T_e$ commute we see that the right-hand side is equal to $T_a^2 T_d^2 T_c^2 T_e^{-2}$.  The lemma follows.
\end{proof}

\begin{proposition}
\label{prop:comm}
For $n \geq 3$, the commutator subgroup $\PB_n'$ of $\PB_n$ is normally generated in $\B_n$ by the single element $[a_{12},a_{13}]$.
\end{proposition}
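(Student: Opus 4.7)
The plan is to set $N$ equal to the normal closure in $\B_n$ of $[a_{12}, a_{13}]$ and prove $N = \PB_n'$. The inclusion $N \subseteq \PB_n'$ is immediate since $\PB_n'$ is characteristic in the normal subgroup $\PB_n \trianglelefteq \B_n$. For the reverse, I will use that $\PB_n$ is generated by Artin's generators $\{a_{ij}\}$, whence $\PB_n'$ is normally generated in $\PB_n$ (and hence in $\B_n$) by the commutators $[a_{ij}, a_{kl}]$; it thus suffices to show that each such commutator lies in $N$. I split into cases according to the topological configuration of the curves $c_{ij}$ and $c_{kl}$ in $\D_n$.

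If $\{i,j\}$ and $\{k,l\}$ share exactly one puncture, the change-of-coordinates principle yields $\sigma \in \B_n = \Mod(\D_n)$ sending the pair of isotopy classes $(c_{12}, c_{13})$ to $(c_{ij}, c_{kl})$; then $\sigma[a_{12}, a_{13}]\sigma^{-1} = [a_{ij}, a_{kl}] \in N$. If $\{i,j\} \cap \{k,l\} = \emptyset$ and the intervals $[i,j], [k,l]$ are unlinked (disjoint or nested), then $c_{ij}$ and $c_{kl}$ can be drawn disjointly and their Dehn twists commute, giving $[a_{ij}, a_{kl}] = 1$.

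The crux is the linked case. Let $S = \{i, j, k, l\}$ and let $T = T_{c_S}$ be the Dehn twist about the simple closed curve surrounding these four punctures. Since $c_S$ bounds the four-punctured sub-disk supporting the embedded copy of $\PB_4$ on $S$, the twist $T$ is central in $\PB_S \cong \PB_4 \subseteq \PB_n$, so $[a_{ij}, T] = 1$. By the classical identification of the full twist with the boundary Dehn twist, $T$ admits a factorization
\[ T \;=\; a_{p_1 q_1}\, a_{p_2 q_2}\, \cdots\, a_{p_6 q_6} \]
where $\{p_1, q_1\}, \ldots, \{p_6, q_6\}$ enumerate the six two-element subsets of $S$ in some fixed order. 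Applying the commutator identity
\[ [a,\, x_1 x_2 \cdots x_6] \;=\; \prod_{r=1}^{6} (x_1 \cdots x_{r-1})\, [a, x_r]\, (x_1 \cdots x_{r-1})^{-1} \]
to $[a_{ij}, T] = 1$ produces six conjugated commutator factors. Exactly one is trivial (when $\{p_r, q_r\} = \{i, j\}$); four have $|\{p_r, q_r\} \cap \{i, j\}| = 1$ and so lie in $N$ by the shared-puncture case; and one is $[a_{ij}, a_{kl}]$ itself, the linked commutator at issue. Reducing the identity modulo $N$, it collapses to $w \, [a_{ij}, a_{kl}] \, w^{-1} \equiv 1 \pmod{N}$ for some prefix $w \in \PB_n$, and normality of $N$ in $\B_n$ yields $[a_{ij}, a_{kl}] \in N$.

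The main obstacle is the linked case. Its resolution rests on the centrality of the boundary Dehn twist of a four-punctured sub-disk: this single relation, combined with the explicit factorization of $T$ as a product of six Artin generators, isolates the linked commutator modulo shared-puncture commutators that are already in $N$.
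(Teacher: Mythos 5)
Your proof is correct, and its skeleton matches the paper's: both reduce $\PB_n'$ to the commutators $[a_{ij},a_{k\ell}]$ of Artin generators, classify the $\B_n$-orbits of ordered pairs $(c_{ij},c_{k\ell})$ into the disjoint-index unlinked, one-shared-index, and linked cases, dispose of the first by commutativity and the second by change of coordinates, and then concentrate on the linked case. Where you genuinely diverge is in the relation used to crack that last case. The paper expands the defining relator $[a_{23}a_{13}a_{23}^{-1},a_{24}]$ of $\PB_n$ via the Witt--Hall identity, which exhibits $[a_{13},a_{24}]$ explicitly as a product of just two $\B_n$-conjugates of $[a_{12},a_{13}]$. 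You instead use the centrality of the full twist $T$ on the four relevant strands together with its classical factorization as the product of the six Artin generators supported on $S=\{i,j,k,\ell\}$, and expand $[a_{ij},T]=1$ into six conjugated commutators: one trivial, four in $N$ by the shared-index case, and one a conjugate of the linked commutator. Your route needs a little more input (the factorization of the full twist, and the standard fact that the strand-inclusion $\PB_4\hookrightarrow\PB_n$ carries Artin generators to Artin generators and the central element to $T_{c_S}$), but it is arguably more conceptual --- it explains \emph{why} a linked commutator must reduce to shared-index ones --- and it treats every linked pair uniformly without first normalizing to $(a_{13},a_{24})$. The paper's identity is more economical but looks more like a rabbit out of a hat. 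One small point to tighten: the factorization of $T$ holds only for suitable orderings of the six two-element subsets (e.g.\ $a_{ij}(a_{ik}a_{jk})(a_{i\ell}a_{j\ell}a_{k\ell})$ for $i<j<k<\ell$ relabelled appropriately), not for an arbitrary one; since you only need some valid ordering this is harmless, but ``in some fixed order'' should be replaced by a specific order or a reference.
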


\begin{proof}

First, the commutator subgroup of any group is normally generated in that group by the commutators of the generators.  Thus $\PB_n'$ is normally generated in $\PB_n$ by all of the commutators $[a_{ij},a_{k\ell}]$.

Next, $\Mod(\D_n)$ acts on the set of ordered pairs of distinct curves $(c_{ij},c_{k\ell})$ with three orbits, corresponding to whether the curves have geometric intersection number equal to 0, 2, or 4.  These orbits are represented by the pairs $(c_{12},c_{34})$, $(c_{12},c_{13})$, and $(c_{13},c_{24})$, respectively.  It follows that the action of $\B_n$ on the set of ordered pairs of Artin generators $a_{ij}$ has three orbits, represented by $(a_{12},a_{34})$, $(a_{12},a_{13})$, and $(a_{12},a_{34})$.  As the commutator $[a_{12},a_{34}]$ is trivial, it follows that $\PB_n'$ is normally generated in $\B_n$ by $[a_{12}, a_{23}]$ and $[a_{13}, a_{24}]$.

We have the following relation in $\PB_n$:
\[  [a_{13},a_{24}]= (a_{13}a_{23}^{-1})[a_{23},a_{24}](a_{13}a_{23}^{-1})^{-1} \ a_{23}^{-1}[a_{24},a_{23}]a_{23}  \]
(this relation is obtained by expanding the well-known relator $[a_{23}a_{13}a_{23}^{-1},a_{24}]$ for $\PB_n$ via the Witt--Hall relation $[xy,z]=x[y,z]x^{-1}[x,z]$).  By the previous paragraph, this relation equates $[a_{13},a_{24}]$ with a product of two conjugates in $\B_n$ of $[a_{12}, a_{13}]$.  This completes the proof.
\end{proof}


\p{Forgetful maps} For any $n$ and any $0\leq k \leq n$ there are $n \choose k$ forgetful maps $\PB_n \to \PB_k$ obtained by forgetting $n-k$ strands.  The various forgetful maps $\PB_n \to \PB_2 \cong \Z$ are the coordinates of a surjective homomorphism $\PB_n \to \Z^{n \choose 2}$ which is in fact the abelianization of $\PB_n$.  At the other extreme, the kernel of any forgetful map $\PB_n \to \PB_{n-1}$ corresponds to the image of a push map, so there is a short exact sequence:
\[ 1 \to \pi_1(\D_n',p) \to \PB_n \to \PB_{n-1}\]
(this is a special case of the so-called Birman exact sequence).  This exact sequence has an obvious splitting.

\begin{proof}[Proof of Proposition~\ref{prop:squares versus kernel}]

Since $\PB_n^2$ equals the kernel of the mod two abelianization of $\PB_n$, it follows that $\PB_n^2$ is the preimage of $2\Z^{n \choose 2}$ under the abelianization map
\[ \alpha : \PB_n \to \Z^{n \choose 2}. \]
As above, the $n \choose 2$ coordinates of $\alpha$ are given by the various forgetful maps $\PB_{n} \to \PB_2 \cong \Z$.  The group $\PB_2$ is generated by a Dehn twist.  It follows that $\alpha(\T_{n}[2])$ is precisely $2\Z^{n \choose 2}$, and so $\T_n[2]$ has the same image under $\alpha$ as $\PB_n^2$.  It remains to show that $\T_n[2]$, like $\PB_n^2$, contains the kernel of $\alpha$, namely, the commutator subgroup $\PB_n'$ of $\PB_n$.

By Proposition~\ref{proposition:squaredlantern}, $\T_{n}[2]$ contains the commutator $[a_{12}, a_{13}]$ (see the discussion after the statement).  As $\T_{n}[2]$ is normal in $\B_{n}$ it then follows from Proposition~\ref{prop:comm} that $\T_n[2]$ contains $\PB_n'$, as desired.
\end{proof}

We pause to record the following corollary of the Main Theorem.  In the statement, $\BI_n$ is the kernel of the integral Burau representation of $\B_n$.  

\begin{corollary}
\label{cor:forgettingBI}
Let $\BI_n \leqslant H \leqslant \B_n[4]$.  For any $1 \leq k < n$, the image of $H$ under any forgetful map $\PB_n \to \PB_k$ is $\B_k[4]$.
\end{corollary}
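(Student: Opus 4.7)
The plan is to sandwich $f(H)$ between two copies of $\B_k[4]$, where $f \colon \PB_n \to \PB_k$ denotes the given forgetful map.

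For the upper bound $f(H) \subseteq \B_k[4]$, I would invoke the Main Theorem, which gives $\B_n[4] = \PB_n^2$ and $\B_k[4] = \PB_k^2$, combined with the fact that $f$ is a surjective homomorphism (iterate the split Birman exact sequence recalled at the end of Section~\ref{sec:sql}). Then $f(\PB_n^2)$ is generated by elements of the form $f(g)^2 = f(g^2)$ with $g \in \PB_n$, so $f(\PB_n^2) = \PB_k^2 = \B_k[4]$. Since $H \subseteq \B_n[4]$, this yields $f(H) \subseteq \B_k[4]$.

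For the reverse inclusion, since $\BI_n \subseteq H$, it suffices to prove $f(\BI_n) = \B_k[4]$. My main tool is Theorem~\ref{theorem:pointpushing}, which says $\rho(\Brun_n)$ equals the image $\rho(\B_n[4])$ (the level four congruence subgroup of the ambient symplectic group). A short check, using Theorem~\ref{thm:acampo plus} together with the fact from Newman--Smart that $\Sp_{2g}(\Z)[2]/\Sp_{2g}(\Z)[4]$ is an elementary abelian $2$-group, shows that $\rho$ restricts to a surjection $\B_n[4] \to \rho(\B_n[4])$ with kernel exactly $\BI_n$. Since $\Brun_n \subseteq \B_n[4]$ by definition of $\B_n[4]$, the identity $\rho(\Brun_n) = \rho(\B_n[4])$ then produces the factorization $\B_n[4] = \Brun_n \cdot \BI_n$.

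To conclude, I would exploit the defining property of the Brunnian subgroup: every Brunnian braid becomes trivial after deleting any single strand, and hence after deleting any collection of $n-k \geq 1$ strands. Therefore $f(\Brun_n) = 1$ for every forgetful map $f \colon \PB_n \to \PB_k$ with $k < n$. Applying $f$ to the factorization $\B_n[4] = \Brun_n \cdot \BI_n$ yields $f(\B_n[4]) = f(\BI_n)$, and combining with the upper bound shows $f(\BI_n) = \B_k[4]$, as required. The only real obstacle is the hinge identity $\rho(\Brun_n) = \rho(\B_n[4])$, which is precisely the forward reference to Theorem~\ref{theorem:pointpushing}; once that is granted, the remainder of the argument is essentially formal bookkeeping.
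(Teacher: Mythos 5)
Your argument is correct in substance but takes a genuinely different route from the paper's. The paper's proof is elementary and self-contained within Sections 2--4: for the upper bound it observes that forgetful maps send squares of Dehn twists to squares of Dehn twists (or to the identity) and invokes the Main Theorem; for the lower bound it takes each generator $T_c^2$ of $\B_k[4]$, lifts $c$ to a curve $\tilde c \subseteq \D_n$ surrounding an \emph{odd} number of marked points, and notes that $T_{\tilde c}^2$ already lies in $\BI_n$ because its lift to the branched double cover is a Dehn twist about a separating curve. You instead derive the factorization $\B_n[4] = \Brun_n \cdot \BI_n$ from the identity $\rho(\Brun_n) = \rho(\B_n[4])$ of Theorem~\ref{theorem:pointpushing}, and then kill the Brunnian factor under any forgetful map. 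Both arguments work, and yours yields the pleasant structural byproduct $\B_n[4] = \Brun_n \cdot \BI_n$; but it costs you two things. First, the forward reference: Theorem~\ref{theorem:pointpushing} lives in Section~\ref{sec:brun} (no circularity arises, since its proof does not use this corollary, but the logical order of the paper is inverted). Second, and more substantively, Theorem~\ref{theorem:pointpushing} is only stated and proved for $g \geq 2$, i.e.\ for $n \geq 5$ odd and $n \geq 6$ even, so your proof as written does not cover $n \leq 4$ (and $n=6$ requires the even case with $g=2$, which is fine, but $n=3,4$ are genuinely outside its scope); the paper's lifting argument has no such restriction. You would need to handle the small cases separately or find a low-genus substitute for Theorem~\ref{theorem:mennicke} and Proposition~\ref{proposition:presymmetricrep}. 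Your upper-bound computation $f(\PB_n^2) = \PB_k^2$ is clean and in fact gives the equality $f(\B_n[4]) = \B_k[4]$ directly, which is slightly more than the paper's containment argument extracts at that stage.
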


\begin{proof}

Let $F: \PB_n \to \PB_k$ be a forgetful map.  Clearly $F$ preserves squares of Dehn twists, so we have $F(\T_n[2]) \subseteq \T_k[2]$.  Hence by the Main Theorem we have that $F(\B_n[4]) \subseteq \B_k[4]$, and in particular $F(H) \subseteq \B_k[4]$.

For the other containment, let $T_c^2 \in \B_k$.  By the Main Theorem, squares of Dehn twists generate $\B_k[4]$ and so it suffices to show that $T_c^2$ lies in the image of $F$.  We can choose a curve $\tilde c \subseteq \D_n$ so that $\tilde c$ contains an odd number of marked points and so that $\tilde c$ maps to $c$ under the forgetful map $\D_n \to \D_k$ (if $c$ does not already surround an odd number of points, we ``remember'' one marked point inside $c$).  Then $F(T_{\tilde c}^2)=T_c^2$.  Moreover, $T_{\tilde c}^2$ lies in $\BI_n$ as its lift to $\SMod(S_g^1)$ or $\SMod(S_g^2)$ is a Dehn twist about a separating curve.  Thus, $T_{\tilde c}^2$ lies in $H$ by assumption.  The corollary follows.
\end{proof}


\section{Burau images of Point pushing subgroups}
\label{sec:brun}

Denote the $n$ marked points of $\D_n$ by $p_1,\dots,p_n$.  As in Section~\ref{sec:sql}, for each $1 \leq i \leq n$ there is a point pushing subgroup $\pi_1(\D_n',p_i) \subseteq \Mod(\D_n)$.  For any $1 \leq k \leq n$ we define $\Br_{n,k}$ to be the subgroup of $\PB_n$ corresponding to the intersection
\[ \pi_1(\D_n',p_1) \cap \cdots \cap \pi_1(\D_n',p_k). \]
The group $\Br_{n,1}$ is the {\it point pushing subgroup} of $\PB_n$, and $\Br_{n,n}$ is the {\it Brunnian subgroup} $\Brun_n$  of $\PB_n$, that is, the subgroup consisting of the braids that become trivial when any one strand is deleted.  In this section we prove the following proposition; the first statement of which appears in the paper of Yu \cite[Theorem 7.3(iii)]{yu}.

\begin{theorem}  
\label{theorem:pointpushing}
Let $g \geq 2$.  
\begin{enumerate}
\item The group $\rho(\Br_{2g+1,1})$ contains $\Sp_{2g}(\Z)[4]$ and
\[ 
 \rho(\Br_{2g+1,1})/\Sp_{2g}(\Z)[4] \cong (\Z/2)^{2g}.
\]
\item The group $\rho(\Br_{2g+2,1})$ contains $(\Sp_{2g}(\Z)[4])_{\vec y_{g+1}}$ and
\[ 
\hspace*{-3ex}  \rho(\Br_{2g+2,1})/(\Sp_{2g}(\Z)[4])_{\vec y_{g+1}} \cong (\Z/2)^{2g+1}.
\]
\end{enumerate}
\end{theorem}

One can use Theorem~\ref{theorem:pointpushing} to compute the indices of $\rho(\Br_{2g+1,1})$ and $\rho(\Br_{2g+2,1})$ in $\Sp_{2g}(\Z)[2]$ and $(\Sp_{2g}(\Z)[2])_{\vec y_{g+1}}$, respectively, using the facts that
\[ [\Sp_{2g}(\Z)[2]:\Sp_{2g}(\Z)[4]]=2^{2g+1 \choose 2}\] and
\[ [(\Sp_{2g}(\Z)[2])_{\vec y_{g+1}}:(\Sp_{2g}(\Z)[4])_{\vec y_{g+1}}]=2^{2g+2 \choose 2}.\]
For example:
\[ [\Sp_{2g}(\Z)[2]:\rho(\Br_{2g+1,1})] = 2^{g(2g-1)}. \]
We will require the following theorem.  The first statement is due to Mennicke \cite[Section 10]{Mennicke} and the second statement follows easily from the first statement and the same type of considerations as in the proof of Theorem~\ref{thm:acampo plus}.

\begin{theorem}
\label{theorem:mennicke}
Let $g \geq 2$ and let $m \geq 2$.
\begin{enumerate}
\item $\Sp_{2g}(\Z)[m]$ is generated by
\[
\{\tau_{\vec v}^m \mid \vec v \in \Z^{2g} \text{ primitive}\}.
\]
\item $\left(\Sp_{2g+2}(\Z)[m]\right)_{\vec y_{g+1}}$ is generated by
\[
\{\tau_{\vec v}^m \mid \vec v \in \Z^{2g+2} \text{ primitive, }  \hspace*{0ex}  \hat\imath(\vec v,\vec y_{g+1})=0\}.
\]
\end{enumerate}
\end{theorem}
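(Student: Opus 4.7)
The plan is to take statement (1) as given---this is Mennicke's theorem, which we cite---and deduce statement (2) from it by an elimination argument patterned directly on the proof of Proposition~\ref{prop:sp trans gens}(2). Given $M \in (\Sp_{2g+2}(\Z)[m])_{\vec y_{g+1}}$, the goal is to multiply $M$ by a sequence of transvections $\tau_{\vec v}^m$ whose direction vectors $\vec v \in \Z^{2g+2}$ are primitive and satisfy $\hat\imath(\vec v,\vec y_{g+1})=0$, until what remains is a block matrix $M_g \oplus I_2$ with respect to the symplectic decomposition
\[
\Z^{2g+2} = \langle \vec x_1,\vec y_1,\dots,\vec x_g,\vec y_g\rangle \oplus \langle \vec x_{g+1},\vec y_{g+1}\rangle.
\]
At that point statement (1) provides a factorization of the $2g\times 2g$ block $M_g \in \Sp_{2g}(\Z)[m]$ into transvections $\tau_{\vec v}^m$ with $\vec v \in \Z^{2g}$ primitive; under the embedding $\Z^{2g}\hookrightarrow \Z^{2g+2}$ each such $\vec v$ remains primitive and automatically has $\hat\imath(\vec v,\vec y_{g+1})=0$.

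To carry out the elimination, observe first that because $M \equiv I \pmod m$, fixes $\vec y_{g+1}$, and preserves the pairing $\hat\imath(\vec x_{g+1},\vec y_{g+1})=1$, one has
\[
M(\vec x_{g+1}) = \vec x_{g+1} + m\sum_{i=1}^{g}(c_i\vec x_i + d_i\vec y_i) + md_{g+1}\vec y_{g+1}
\]
for some integers $c_i, d_i, d_{g+1}$. Then, imitating the $m=2$ template of Proposition~\ref{prop:sp trans gens}(2), I would apply products of the form $\tau_{\vec y_{g+1}-\vec x_i}^{-mc_i}\tau_{\vec x_i}^{mc_i}$ (and analogous ones with $\vec y_i$ in place of $\vec x_i$) to successively strip off the $\vec x_i$- and $\vec y_i$-coefficients of $M(\vec x_{g+1})$. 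Crucially each direction vector appearing---namely $\vec x_i$, $\vec y_i$, $\vec y_{g+1}-\vec x_i$, $\vec y_{g+1}-\vec y_i$ for $i \le g$---is primitive and symplectically orthogonal to $\vec y_{g+1}$, so lies in the allowed generating set. A final power of $\tau_{\vec y_{g+1}}^m$ (with $\vec y_{g+1}$ primitive and $\hat\imath(\vec y_{g+1},\vec y_{g+1})=0$) then kills the residual $\vec y_{g+1}$-coefficient. After all these reductions $M$ fixes both $\vec x_{g+1}$ and $\vec y_{g+1}$, and symplectic orthogonality then forces $M$ to preserve $\langle \vec x_1,\vec y_1,\dots,\vec x_g,\vec y_g\rangle$, producing the decomposition $M = M_g \oplus I_2$ needed to invoke (1).

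The piece of bookkeeping to double-check is that each elementary cancellation step really has the claimed effect---namely that the product $\tau_{\vec y_{g+1}-\vec x_i}^{-mc_i}\tau_{\vec x_i}^{mc_i}$ trades the $\vec x_i$-coefficient of $M(\vec x_{g+1})$ for an adjustment of the $\vec y_{g+1}$-coefficient without introducing new $\vec x_j$- or $\vec y_j$-components for $j \ne i$, possibly iterating to drive the coefficients to zero exactly rather than merely modulo a higher power of $m$. This is precisely the matrix computation carried out (in the case $m=2$) in the proof of Proposition~\ref{prop:sp trans gens}(2), and it extends to arbitrary $m$ without essential change---which is why the authors remark that the deduction ``follows easily'' from (1).
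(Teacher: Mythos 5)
Your argument is correct and is essentially the one the paper intends: the authors cite Mennicke for statement (1) and dispose of statement (2) in a single sentence, appealing to the same elimination of the $\langle \vec x_{g+1},\vec y_{g+1}\rangle$-block that appears in the proof of Proposition~\ref{prop:sp trans gens}(2), which is exactly the computation you carry out for general $m$. (The ``$[4]$'' in the statement of part (2) is evidently a typo for ``$[m]$'', as your proof implicitly assumes.)
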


Say that a simple closed curve $c$ in $S_g^1$ is \emph{pre-symmetric} if $\iota(c) \cap c = \emptyset$.  

\begin{proposition}
\label{proposition:presymmetricrep}
If $\vec v \in H_1(X_n;\Z)$ is primitive then it is represented by a pre-symmetric, oriented simple closed curve. 
\end{proposition}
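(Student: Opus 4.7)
The plan is to exploit the Arnol'd-type identification of mod-$2$ homology to build a canonical pre-symmetric representative of the correct mod-$2$ class, and then to use the transitive action of $\SMod(X_n)$ on primitive vectors within each mod-$2$ orbit to adjust the integral class.

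First, I would observe that since $\vec v$ is primitive, its mod $2$ reduction $\bar{\vec v} \in H_1(X_n;\Z/2)$ is nonzero, and by Arnol'd's isomorphism $\bar{\vec v}$ corresponds (via the projection to $H_1(\D_n^\circ;\Z/2)^{\even}$) to a nonempty subset $S$ of the marked points of $\D_n$ of even cardinality. Let $\gamma_0 \subset \D_n^\circ$ be a simple closed loop encircling exactly the points of $S$. Because $\gamma_0$ has trivial total mod-$2$ linking with the branch points, the covering $X_n^\circ \to \D_n^\circ$ is trivial over $\gamma_0$, so the preimage consists of two disjoint simple closed curves $c_0$ and $\iota(c_0)$ in $X_n$. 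By construction $c_0$ is pre-symmetric with $[c_0] \equiv \bar{\vec v} \pmod 2$.

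Second, I would verify primitivity of $\vec v_0 := [c_0]$. The pair $c_0 \cup \iota(c_0)$ bounds the preimage $A$ of the sub-disk enclosed by $\gamma_0$, which by Riemann--Hurwitz is a connected surface of genus $|S|/2 - 1$ with two boundary components; the complementary region $B \subset X_n$ is likewise connected. Removing only $c_0$ leaves $A \setminus c_0$ and $B \setminus c_0$ joined along $\iota(c_0)$, hence $X_n \setminus c_0$ is connected and $c_0$ is non-separating. A non-separating simple closed curve on a surface has primitive homology class, so $\vec v_0$ is primitive.

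Third, I would produce an element $f \in \SMod(X_n)$ with $\rho(f)(\vec v_0) = \vec v$; the image $f(c_0)$ is then a pre-symmetric simple closed curve representing $\vec v$, since conjugation by $f$ preserves $\iota$ and disjointness. By Theorem~\ref{thm:acampo plus} the subgroup $\Sp_{2g}(\Z)[2]$ lies in $\rho(\SMod(X_n))$, and since $\vec v$ and $\vec v_0$ are both primitive with $\vec v \equiv \vec v_0 \pmod 2$, it suffices to show that $\Sp_{2g}(\Z)[2]$ acts transitively on primitive vectors sharing a given mod-$2$ reduction. This is where I expect the main technical work: using Mennicke's theorem (Theorem~\ref{theorem:mennicke}) I would take generators $\tau_{\vec u}^2$ and invoke the identity $\tau_{\vec u}^2(\vec w) = \vec w + 2\hat\imath(\vec w, \vec u)\vec u$; since $\vec v_0$ is primitive, the functional $\hat\imath(\vec v_0, \cdot) : \Z^{2g} \to \Z$ is surjective, which lets one choose $\vec u$ with any prescribed pairing and iteratively translate $\vec v_0$ to $\vec v$ by an element of $\Sp_{2g}(\Z)[2]$. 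The analogous argument with $(\Sp_{2g+2}(\Z)[2])_{\vec y_{g+1}}$ in place of $\Sp_{2g}(\Z)[2]$ will handle the $n = 2g+2$ case.
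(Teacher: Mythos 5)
Your argument follows essentially the same route as the paper's: use Arnol'd's identification of $H_1(X_n;\Z/2)$ with $H_1(\D_n^\circ;\Z/2)^{\even}$ to produce a pre-symmetric curve $c_0$ in the correct mod-2 class, then move its integral class to $\vec v$ by an element of $\Sp_{2g}(\Z)[2]$ (resp.\ $(\Sp_{2g+2}(\Z)[2])_{\vec y_{g+1}}$) realized in $\SMod(X_n)$ via Theorem~\ref{thm:acampo plus}; your explicit check that $c_0$ is non-separating, hence primitive, is a point the paper leaves implicit. The one substantive divergence is the transitivity of the level-two congruence group on primitive vectors with a fixed mod-2 reduction: the paper simply cites \cite[Corollary 3.11]{bmp} for this, whereas you propose to prove it with squared transvections, and there your sketch is too thin as stated --- since $\tau_{\vec u}^2(\vec v_0)=\vec v_0+2\hat\imath(\vec v_0,\vec u)\vec u$, the direction $\vec u$ and the coefficient $\hat\imath(\vec v_0,\vec u)$ are coupled, so ``choosing $\vec u$ with any prescribed pairing'' does not by itself let you add an arbitrary even vector to $\vec v_0$. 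The statement is true and is provable by a transvection/row-reduction argument (first normalize $\vec v_0$ to $\vec x_1$ using transitivity of the full symplectic group and normality of the congruence subgroup, then clear the even coordinates of $\vec v$ one at a time, in the spirit of the second half of the proof of Proposition~\ref{prop:sp trans gens}), but that step needs to be carried out or cited rather than gestured at.
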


\begin{proof}

We first treat the case $n=2g+1$, in which case $X_n \cong S_g^1$.  It follows from the description of Arnol'd's isomorphism between $H_1(S_g^1;\Z/2)$ and $H_1(\D_{2g+1}^\circ;\Z/2)^{\even}$ that there is a pre-symmetric representative $c'$ in $S_g^1$ of the mod two reduction of $\vec v$.  Indeed, any class in $H_1(\D_{2g+1}^\circ;\Z/2)^{\even}$ is represented by a simple closed curve surrounding an even number of marked points, and the preimage of such a curve has two components, either of which is the desired $c'$.  

Let $\vec v'$ denote the class of $c'$ in $H_1(S_g^1;\Z)$.  The group $\Sp_{2g}(\Z)[2]$ acts transitively on the representatives of a given class in $H_1(S_g^1;\Z/2)$ (see, e.g. \cite[Corollary 3.11]{bmp}) and so there is an $M \in \Sp_{2g}(\Z)[2]$ with $M(\vec v') = \vec v$.  By Theorem~\ref{thm:acampo plus}(1), there is a $b \in \PB_{2g+1}$ with $\rho(b) = M$.  If $\tilde b$ is the corresponding element of $\SMod(S_g^1)$, then $\tilde b(c')$ is the desired representative.  

The case of $n=2g+2$ is almost exactly the same.  The main difference is that we must choose $M$ to lie in $(\Sp_{2g+2}(\Z)[2])_{\vec y_{g+1}}$ (the existence of such an $M$ follows by applying the same statement as before \cite[Corollary 3.11]{bmp} to the pair $\vec v, \vec y_{g+1}$).  We can then apply Theorem~\ref{thm:acampo plus}(2) to complete the proof.
\end{proof}

One can prove Proposition~\ref{proposition:presymmetricrep} without Theorem~\ref{thm:acampo plus} by applying a hyperelliptic version of the Euclidean algorithm for simple closed curves due to Meeks and Patrusky (see \cite[Proposition 6.2]{primer}).

\p{Symmetric homology classes} We remark that the pre-symmetric representative of $\vec v$ given by Proposition~\ref{proposition:presymmetricrep} is homotopic to a symmetric one (that is, one fixed by the hyperelliptic involution) if and only if the corresponding element of $H_1(\D_{2g+1}^\circ;\Z/2)^{\even}$ has exactly two nonzero entries in the standard basis, that is, if and only if the corresponding simple closed curve in $\D_n$ surrounds exactly two marked points.  In particular, the existence of a symmetric representative of $\vec v$ is completely determined by the mod two reduction of $\vec v$.  This was observed by A'Campo \cite[Th\'eor\`eme 3]{acampo}; see also Wajnryb \cite[Theorem 2 and Corollary]{bw}.

\begin{proof}[Proof of Theorem~\ref{theorem:pointpushing}]

We begin with the first statement, which concerns the odd-stranded braid groups $\PB_{2g+1}$ with $g \geq 2$.  The first goal is to prove that $\rho(\Br_{2g+1,1})$ contains $\Sp_{2g}(\Z)[4]$.  By Theorem~\ref{theorem:mennicke}(1), it is enough to show that $\rho(\Br_{2g+1,1})$ contains every $\tau_{\vec v}^4$ with $\vec v$ primitive.  To this end, let $\vec v$ be a primitive element of $\Z^{2g}$.  By Proposition~\ref{proposition:presymmetricrep}, there is a pre-symmetric representative $c$ of $\vec v$ in $S_g^1$.

Let $\bar c$ denote the image of $c$ in $\D_{2g+1}$; the curve $\bar c$ is a simple closed curve surrounding an even number of marked points. Choose a simple closed curve $\bar d$ in $\D_{2g+1}$ so that $\bar c \cup \bar d$ form the boundary of an annulus containing the marked point $p_1$ and no other $p_i$ (if $\bar c$ surrounds exactly two marked points, then $\bar d$ is inessential).  

Clearly the element of $\PB_{2g+1}$ given by $T_{\bar c}^2 T_{\bar d}^{-2}$ lies in $\Br_{2g+1,1}$.  We claim that it maps to $\tau_{\vec v}^4$ under $\rho$.  The image of $T_{\bar c}^2$ in $\Mod(S_g^1)$ is $T_{c}^2T_{\iota(c)}^2$ and the image of the latter in $\Sp_{2g}(\Z)$ is $\tau_{[\vec c]}^2 \tau_{[\iota(\vec c)]}^2 = \tau_{\vec v}^4$ (here we choose an arbitrary orientation on $c$).  Next, since $\bar d$ surrounds an odd number of marked points, the image of $T_{\bar d}^2$ in $\Mod(S_g^1)$ is a Dehn twist about a separating curve and the image of the latter in $\Sp_{2g}(\Z)$ is trivial.  This gives the claim, and hence the statement that $\rho(\Br_{2g+1,1})$ contains $\Sp_{2g}(\Z)[4]$.

\medskip

We now proceed to compute the image of $\Br_{2g+1,k}$ in $\Sp_{2g}(\Z)[2]/\Sp_{2g}(\Z)[4]$.  Recall from Section~\ref{sec:level four mod 2} that $\Sp_{2g}(\Z)[2]/\Sp_{2g}(\Z)[4]$ is isomorphic to $(\Z/2)^{2g+1 \choose 2}$ and the 
coordinates of the resulting map $\PB_{2g+1} \to (\Z/2)^{2g+1 \choose 2}$ are given by the various maps $f_{ij}  : \PB_{2g+1} \to \PB_2/\PB_2^2$ 
obtained by forgetting all strands except the $i$th and $j$th and reducing modulo two.  

We claim that $f_{ij}(\Br_{2g+1,k})$ is nontrivial if and only if $\{1,\dots,k\} \subseteq \{i,j\}$.  Indeed, if $\{1,\dots,k\} \subseteq \{i,j\}$ then the pure braid $a_{ij}$ from Section~\ref{sec:sql} lies in $\Br_{2g+1,k}$ and has nontrivial image under $f_{ij}$. On the other hand if $\{1,\dots,k\} \not\subseteq \{i,j\}$ then $f_{ij}(\Br_{2g+1,k})$ is trivial by definition, giving the claim.  The description of the image of $\Br_{2g+1,1}$ follows immediately.

The theorem for the even-stranded braid group is proven in the same way, with $\PB_{2g+1}$, $\Br_{2g+1,k}$, and $\Sp_{2g}(\Z)[2]$ replaced with $\PB_{2g+2}$, $\Br_{2g+2,k}$, and $\left(\Sp_{2g}(\Z)[2]\right)_{\vec y_{g+1}}$ and with Theorem~\ref{theorem:mennicke}(1) replaced by Theorem~\ref{theorem:mennicke}(2).
\end{proof}

Theorem~\ref{theorem:mennicke} also holds in the case $g=1$ and $m=4$.  To see this, consider the action of $\Sp_2(\Z)[4]=\SL_2(\Z)[4]$ on the hyperbolic plane $\mathbb{H}^2$.  Using the Farey tessellation of $\mathbb{H}^2$, the quotient is easily seen to be a punctured sphere (it is an octahedron minus the vertices) and so the fundamental group is generated by loops around the punctures.  Since an octahedron has 4 triangles at each vertex, each corresponds to a 4th power of a transvection.

Hence, our proof of  Theorem~\ref{theorem:pointpushing} also applies in the case of $g=1$ and so $\rho(\Br_{3,1})$ contains $\SL_2(\Z)[4]$ and the image of $\rho(\Br_{3,1})$ in $\SL_2(\Z)[2]/\SL_2(\Z)[4]$ is $(\Z/2)^2$.  
The group $\BI_3$ is contained in the center of $\B_3$, which has trivial intersection with $\Br_{3,1}$.  It follows that there is a short exact sequence
\[
1 \to \SL_2(\Z)[4] \to \Br_{3,1} \to (\Z/2)^2.
\]
Specifically, $\Br_{3,1}$ is identified with the index 2 subgroup of $\SL_2(\Z)[2]$ consisting of matrices of the form $I+2A$ where $A$ is congruent to one of the following matrices modulo 2:
\[
\left(
\begin{array}{cc}
0 & 0 \\ 0 & 0
\end{array}
\right), \  
\left(
\begin{array}{cc}
0 & 1 \\ 0 & 0
\end{array}
\right), \  
\left(
\begin{array}{cc}
1 & 1 \\ 1 & 1
\end{array}
\right), \   \text{ or }\ 
\left(
\begin{array}{cc}
1 & 0 \\ 1 & 1
\end{array}
\right).
\]

\bibliographystyle{plain}
\bibliography{bncong}

\end{document}